
\documentclass{article}


\usepackage{amsmath,amsfonts,amsthm,bm}

\newtheorem{theorem}{Theorem}[section]


















\def\1{\bm{1}}




\def\rvu{{\mathbf{i}}}

\def\rvl{{\mathbf{l}}}

\def\rvu{{\mathbf{u}}}



\def\rmH{{\mathbf{H}}}



\def\vmu{{\bm{\mu}}}
\def\vnu{{\bm{\nu}}}
\def\vtheta{{\bm{\theta}}}
\def\vdelta{{\bm{\delta}}}

\def\vb{{\bm{b}}}
\def\vc{{\bm{c}}}
\def\vd{{\bm{d}}}
\def\ve{{\bm{e}}}
\def\vf{{\bm{f}}}

\def\vl{{\bm{l}}}

\def\vp{{\bm{p}}}
\def\vq{{\bm{q}}}

\def\vs{{\bm{s}}}

\def\vu{{\bm{u}}}
\def\vv{{\bm{v}}}
\def\vw{{\bm{w}}}
\def\vx{{\bm{x}}}
\def\vy{{\bm{y}}}
\def\vz{{\bm{z}}}



\def\mH{{\bm{H}}}

\def\mW{{\bm{W}}}

\def\mBeta{{\bm{\beta}}}

\DeclareMathAlphabet{\mathsfit}{\encodingdefault}{\sfdefault}{m}{sl}
\SetMathAlphabet{\mathsfit}{bold}{\encodingdefault}{\sfdefault}{bx}{n}


\def\gH{{\mathcal{H}}}

\def\gS{{\mathcal{S}}}

\def\gX{{\mathcal{X}}}



\def\sR{{\mathbb{R}}}



\def\ftheta{{f_{\vtheta}}}






\newcommand{\R}{\mathbb{R}}



\DeclareMathOperator*{\argmin}{arg\,min}


\newcommand{\ubar}[1]{\underline{#1}}

\newcommand{\pg}{\bm{p}} 
\newcommand{\pd}{\bm{d}} 

\newcommand{\y}{\mathbf{y}}
\newcommand{\z}{\mathbf{z}}

\newcommand{\M}{\mathbf{M}}
\newcommand{\N}{\mathbf{N}}
\newcommand{\K}{\mathbf{K}}
\newcommand{\w}{\mathbf{w}}

\newcommand{\Mth}{M^{\text{th}}}

\newcommand{\pfmax}{\bm{\bar{f}}}

\newcommand{\pgmin}{\bm{\ubar{p}}}
\newcommand{\pgmax}{\bm{\bar{p}}}

\newcommand{\xith}{\xi^{\text{th}}}

\newcommand{\muPgMin}{\ubar{\mu}}
\newcommand{\muPgMax}{\bar{\mu}}
\newcommand{\nuThMin}{\ubar{\nu}}
\newcommand{\nuThMax}{\bar{\nu}}


\usepackage{microtype}
\usepackage{booktabs} 
\usepackage{hyperref}
\usepackage{graphicx}
\usepackage[caption=false,font=footnotesize]{subfig}

\newcommand{\revision}[2][black]{\textcolor{#1}{#2}}

\usepackage[accepted]{icml2024}

\usepackage{amsmath}
\usepackage{amssymb}
\usepackage{mathtools}
\usepackage{amsthm}

\usepackage[capitalize,noabbrev]{cleveref}

\usepackage[textsize=tiny]{todonotes}

\icmltitlerunning{Compact Optimality Verification for Optimization Proxies}

\begin{document}

\twocolumn[
\icmltitle{Compact Optimality Verification for Optimization Proxies}

\begin{icmlauthorlist}
\icmlauthor{Wenbo Chen}{isye,ai4opt}
\icmlauthor{Haoruo Zhao}{isye,ai4opt}
\icmlauthor{Mathieu Tanneau}{isye,ai4opt}
\icmlauthor{Pascal Van Hentenryck}{isye,ai4opt}
\end{icmlauthorlist}

\icmlaffiliation{isye}{H. Milton Stewart School of Industrial and Systems Engineering, Georgia Institute of Technology, Atlanta, USA}
\icmlaffiliation{ai4opt}{NSF Artificial Intelligence Research Institute for Advances in Optimization (AI4OPT), USA}

\icmlcorrespondingauthor{Wenbo Chen}{wenbo.chen@gatech.edu}

\icmlkeywords{Neural network verification, optimization proxy, bilevel optimization}

\vskip 0.3in
]



\printAffiliationsAndNotice{}  

\begin{abstract}
Recent years have witnessed increasing interest in optimization proxies, i.e., machine learning models that approximate the input-output mapping of parametric optimization problems and return near-optimal feasible solutions. Following recent work by \cite{nellikkath2021physics}, this paper reconsiders the optimality verification problem for optimization proxies, i.e., the determination of the worst-case optimality gap over the instance distribution. The paper proposes a compact formulation for optimality verification and a gradient-based primal heuristic that brings \revision{substantial} computational benefits to the original formulation. The compact formulation is also more general and applies to non-convex optimization problems. The benefits of the compact formulation are demonstrated on 
large-scale DC Optimal Power Flow and knapsack problems. 
\end{abstract}

\section{Introduction}

In recent years, there has been a surge of interest in optimization proxies, i.e., differentiable programs that approximate the input/output mappings of parametric optimization problems. 
Parametric optimization problems arise in many application areas, including power systems operations, supply chain management, and manufacturing.
To be applicable in practice, these optimization proxies need to satisfy two key properties: they must return {\em feasible} solutions and they must return {\em high-quality} solutions. Feasibility has received significant attention in recent years. Constraint violations can be mitigated by incorporating them in the loss function  \citep{fioretto2020predicting,tran2021differentially,velloso2021combining,pan2020deepopf}. In some cases, the feasibility could be guaranteed by designing mask operations in the autoregression decoding \citep{bello2016neural,khalil2017learning,song2022flexible} or designing projection procedures \citep{joshi2019efficient,amos2017optnet,agrawal2019differentiable,tordesillas2023rayen,park2023compact,li2023learning,chen2023end}. For better efficiency, the projection procedure can be approximated with unrolled first-order methods \citep{chen2020rna,donti2021dc3,scieur2022curse,monga2021algorithm,sun2022alternating}.

However, quality guarantees for optimization proxies have received much less attention and have remained largely empirical. To the authors' knowledge, \cite{nellikkath2021physics} is the only work that provably provides worst-case guarantees for optimization proxies. They formalize the optimality verification problem as a bilevel optimization and reformulate it into a single level using traditional techniques from optimization theory. However, the resulting single-level formulation faces significant computational challenges as it introduces a large number of auxiliary decision variables and constraints. Moreover, it only applies to convex parametric optimization problems. 

This paper reconsiders the optimality verification problem and proposes a compact formulation that applies to non-convex problems and is more advantageous from a computational standpoint. Moreover, the paper proposes an effective primal heuristic, based on a (parallelized) \revision{Projected Gradient Attack} (PGA). When the problem is convex, PGA makes use of a conservative approximation of the value function of the optimization that leverages subgradients. 
To showcase the benefits of these modeling and algorithmic contributions, the paper considers two applications: the DC Optimal Power Flow (DC-OPF) problems and the knapsack problems. The DC-OPF illustrates how the compact optimality verification formulation and PGA can be applied to realistic industrial-sized test cases, while the knapsack problem shows their applicability to a non-convex problem. The paper also proposes novel Mixed-Integer Programming (MIP) encoding of the feasibility layers of the DC-OPF and knapsack optimization proxies. Experimental results are provided to highlight the benefits of the compact optimality verification formulation. 

The contributions of this paper are summarized as follows.
\begin{itemize}
    \item The paper proposes a {\em compact formulation for the optimality verification} of optimization proxies. The formulation provides computational benefits compared to the existing bilevel formulation and can be used to verify non-convex optimization problems.
    
    \item The paper proposes a (parallelized) \revision{Projected Gradient Attack} (PGA) that is effective in finding high-quality \revision{feasible solutions i.e., adversary points}. The PGA features a novel conservative approximation of convex value functions. 
    
    \item The paper demonstrates how to apply the compact optimality verification to two problems: large-scale DC Optimal Power Flows (DC-OPF) and knapsack problems. The paper contributes new feasibility restoration layers for the proxies and new encoding of these layers as MIP models for verification purposes. 

    \item Extensive experiments show the compact formulation, together with the primal heuristic, can effectively verify optimization proxies for large-scale DC-OPF problems and knapsack problems. The compact formulation also brings \revision{substantial} computational benefits compared to the bilevel formulation.
\end{itemize}

The rest of the paper is organized as follows.
Section \ref{sec:related_works} surveys the relevant literature. 
Section \ref{sec:preliminary} introduces background knowledge about parametric optimization, optimization proxies, and optimality verification.
Section \ref{sec:compact_formulation} presents the compact formulation for the optimality verification.
Section \ref{sec:pgd} discusses the projected gradient attack with value function approximation.
Section \ref{sec:DCOPF} presents the optimality verification on DC-OPF proxies and reports the numerical results on realistic industrial-size instances.
Section \ref{sec:Knapsack} presents the optimality verification for non-convex parametric optimization problems using the knapsack problem as a case study.
Section \ref{sec:conclusion} concludes the paper and discusses future research directions.
Section \ref{sec:impact_statement} includes the potential broader impact of the work.

\section{Related Work}
\label{sec:related_works}

Neural networks have been increasingly used in safety-critical applications such as autonomous driving \cite{bojarski2016end}, aviation \cite{julian2016policy} and power systems \cite{chen2022learning}.
However, it has become apparent that they may be highly sensitive to adversarial examples \cite{szegedy2013intriguing} i.e., a small input perturbation could cause a significant and undesired change in the output. 
To characterize the effects of such perturbations, recent verification algorithms typically employ one of two strategies: either they perform an exhaustive search of the input domain to identify a worst-case scenario \cite{venzke2020learning, nellikkath2021physics}, or they adopt a less computationally intensive method to approximate an upper bound for the worst-case violation \cite{chen2022deepsplit, xu2021fast, raghunathan2018semidefinite}. 

While optimality verification falls in the first category, it departs from the large body of the literature in two ways.
First, evaluating the objective of the optimality verification problem involves the solving of an optimization problem, which brings unique modeling and computational challenges.
Second, motivated by practical application in power systems, this paper considers larger and more complex input perturbations than the small $\ell_{p}$-norm perturbation often considered in existing work, as outlined in \cite{wang2021betacrown}.

It is also important to mention the work on dual optimization proxies \cite{qiu2023dual}. Dual optimization proxies provide dual feasible solutions at inference time for a particular instance. In contrast, optimality verification provides a worst-case guarantee for a distribution of instances. They complement each other naturally. \cite{qiu2023dual} show how to derive dual optimization proxies for the second-order cone relaxation of AC-OPF.



\section{Preliminaries}
\label{sec:preliminary}


\subsection{Parametric Optimization}
\label{sec:background:parametric_opt}

 Consider a parametric optimization problem
 of the form
    \begin{subequations}
    \label{eq:optimization}
    \begin{align}
        P(\vx): \quad \min_{\vy} \quad & c(\vx, \vy)\\
        \text{s.t.} \quad
            & g(\vx, \vy) \leq 0,
    \end{align}
    \end{subequations}
    where $\vx \in \R^p$ is the input \emph{parameter}, $\vy \in \R^n$ is the decision variable,  $c: \R^p \times \R^n \rightarrow \R$ is the cost function and $g: \R^p \times \R^n \rightarrow \R^m$ represents the constraints.
    The \emph{feasible set} is denoted by $\mathcal{Y}(\vx) \, {=} \, \{ \vy \, {\in} \, \mathbb{R}^{n} \, | \, g(\vx, \vy) \, {\leq} \, 0 \}$.
    The \emph{optimal value} of $P(\vx)$ is denoted by $\Phi(\vx)$ and $\Phi$ is referred to as the \emph{value function}. 
    The set of optimal solutions is denoted by $\mathcal{Y}^{*}(\vx) \subseteq \mathcal{Y}(\vx)$. 
    The parametric optimization problem can be viewed as a mapping from the input parameter \revision{$\vx \, {\in} \, \gX$} to an optimal decision \revision{$\vy^{*} \in \mathcal{Y}^{*}(\vx)$}.
    
    For instance, in the Optimal Power Flow (OPF) problem, the parameters are the electricity demand and generation costs. The optimization consists in finding the most cost-effective power generation that satisfies the physical and engineering constraints. In the Traveling Salesman Problem (TSP), the parameters are the locations and travel costs and the optimization consists in finding the shortest possible route that visits each location exactly once and returns to the starting point. 

    \revision{In a minimization problem, a feasible solution provides a primal bound, which is an upper bound on the optimal value.
    A dual bound is obtained from a dual-feasible solution and/or via branch-and-bound, and establishes a lower bound on the optimal value.
    Dual bounds are central to proving global optimality.
    Note that, when maximizing, a primal (resp dual) bound is a lower (resp. upper) bound on the optimal value.}

\subsection{Optimization Proxies}
\label{sec:background:opt_proxies}

    \begin{figure}[!t]
        \centering
        \includegraphics[width=0.9\columnwidth]{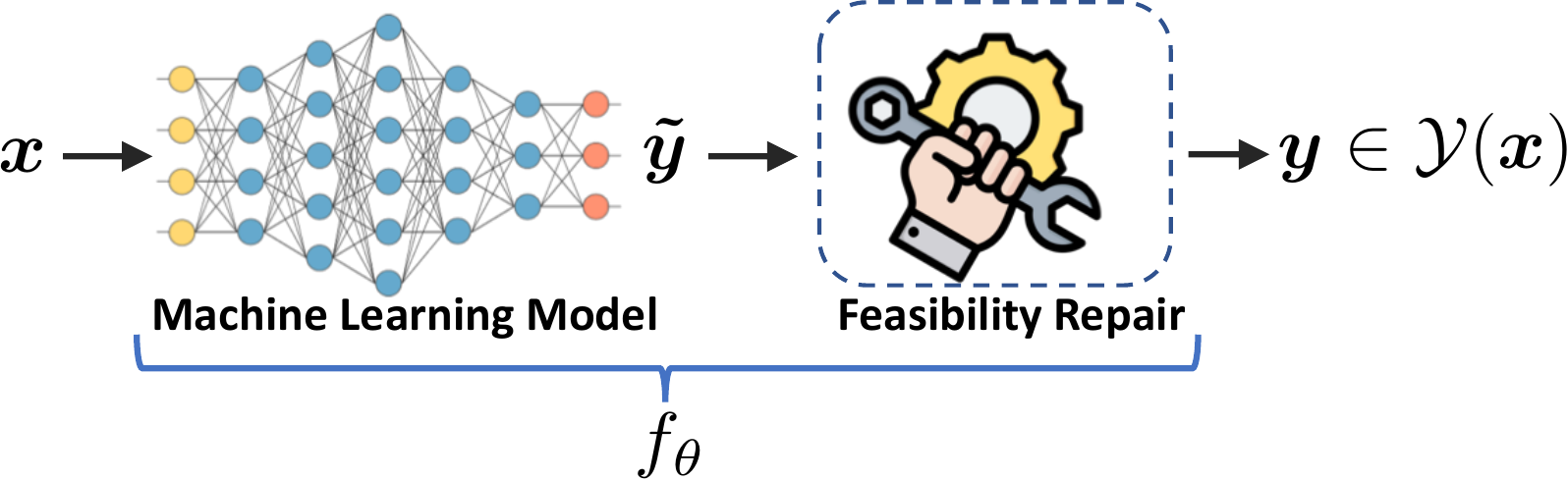}
        \caption{Optimization Proxies}
        \label{fig:optimization_proxies}
        \vspace{-1em}
    \end{figure}

    Optimization proxies (see Figure \ref{fig:optimization_proxies}) are differentiable programs 
    that approximate the mapping from input parameters $\vx$ to optimal decisions $\vy^{*}$ of the parametric optimization problem. 
    Because outputs of machine learning typically cannot satisfy complex constraints, optimization proxies consist of two parts, a machine learning model predicting the optimal decision and a feasibility repair step that projects the prediction into the feasible space. Trained optimization proxies are denoted by $f_{\theta}$, with $\theta$ denoting the weights.

    There has been significant progress in ensuring that optimization proxies produce feasible solutions e.g., \cite{li2023learning,donti2021dc3,chen2023end}. Therefore, this paper assumes that optimization proxies provide feasible solutions, i.e., 
    \begin{align}
        \forall \vx \in \gX, \ f_{\theta}(\vx) \in \mathcal{Y}(\vx).
    \end{align}

    Depending on the parametrization of the machine learning models and the modeling of the feasibility repair steps, optimization proxies could be trained using supervised learning \citep{joshi2019efficient,kotary2022fast,kotary2021learning,chen2022learning}, reinforcement learning \citep{bello2016neural,khalil2017learning,song2022flexible} and unsupervised learning \citep{karalias2020erdos,wang2022unsupervised,donti2021dc3,park2023self,chen2023end}.

\subsection{Optimality Verification}
\label{sec:background:verification}


        

    Let $\mathcal{X} \subseteq \mathbb{R}^{p}$ denote the set of possible inputs and $f_{\theta}$ be an optimization proxy. The {\em Optimality Verification Problem} can be formalized as
        \begin{align}
            \label{eq:opt_verification:value_function}
            (P_o): \quad \max_{\vx \in \gX} \quad c(\vx, f_{\theta}(\vx)) - \Phi(\vx)
        \end{align}
        Since the proxy is feasible, i.e., $f_{\theta}(\vx) \, {\in} \, \mathcal{Y}(\vx)$, it follows that $c(\vx, f_{\theta}(\vx)) \, {\geq} \, \Phi(\vx), \forall \vx \, {\in} \, \mathcal{X}$.
        In general, there is no analytical formula for $\Phi$, which is typically 
        not differentiable or even continuous. Hence $P_o$ cannot be solved directly.

        \cite{nellikkath2021physics} introduced a bilevel formulation for the optimality verification problem
        \begin{subequations}
            \label{eq:opt_verification:bilevel}
            \begin{align}
                \max_{\vx \in \gX} \quad 
                    & c(\vx, f_{\theta}(\vx)) - c(\vx, \vy^{*})\\
                \text{s.t.} \quad
                    & \vy^{*} \in \argmin_{\vy \in \mathcal{Y}(\vx)} \quad c(\vx, \vy).\label{eq:opt_verification:bilevel:lowerlevel}
            \end{align}
        \end{subequations}
        \revision{In \eqref{eq:opt_verification:bilevel}, the leader (upper level) chooses input $\vx \, {\in} \, \gX$ and computes the neural network output $f_{\theta}(\vx)$ via a MIP encoding.
        The follower (lower level) then computes an optimal solution $\vy^{*}$ of the optimization problem \eqref{eq:opt_verification:bilevel:lowerlevel}.}
        When the lower level is linear or quadratic, the bilevel formulation admits a single-level reformulation using the KKT conditions and mixed-complementarity constraints. Assuming that the proxy is a ReLU-based DNN, the overall verification problem can then be cast as an Mixed Integer Linear Programming (MILP) and solved with off-the-shelf optimization solvers like Gurobi. 
        \revision{More generally, a single-level reformulation requires strong duality assumptions and introduces additional variables and constraints, especially non-convex complementarity constraints. It creates some computational challenges as documented in the experiments. 
        Moreover, the reformulation is not available when problem $P$ is non-convex, since KKT conditions are not sufficient for optimality, and may not apply if, e.g., the lower level is discrete. In general, bilevel optimization with non-convex lower-level problems is $\Sigma_{2}^{P}$ hard \cite{caprara2013complexity}. No existing solver can solve such problems efficiently.}

\section{Compact Optimality Verification}
\label{compact_optimality_verification}

\label{sec:compact_formulation}

The core contribution of this paper is a compact formulation for the optimality verification problem, that addresses the shortcomings of the bilevel approach.
Namely, the paper proposes to formulate the optimality verification as
\begin{subequations}
\label{eq:optimality_verification_compact}
\begin{align}
    (P_c) \quad \max_{\vx \in \mathcal{X}, \vy} \quad 
    & c(\vx, \hat{\vy}) - c(\vx, \vy)\\
    s.t. \quad
    & \hat{\vy} = \ftheta(\vx),\\
    & \label{eq:optimality_verification_compact:high_point}
     \vy \in \mathcal{Y}(\vx).
\end{align}
\end{subequations}

The main difference between \eqref{eq:optimality_verification_compact} and \eqref{eq:opt_verification:bilevel} is that the inner problem \eqref{eq:opt_verification:bilevel:lowerlevel} is replaced with the simpler constraint \eqref{eq:optimality_verification_compact:high_point}.
In fact, the proposed formulation \eqref{eq:optimality_verification_compact} is the so-called high-point relaxation \cite{Moore1990_MixedIntegerLinearBilevel} of the bilevel formulation \eqref{eq:opt_verification:bilevel}.
Theorem \ref{thm:HPR_exact} shows that \eqref{eq:optimality_verification_compact} has the same optimum as \eqref{eq:opt_verification:bilevel}, i.e., the high-point relaxation is exact.
To the authors' knowledge, this result is novel.

\begin{theorem}
    \label{thm:HPR_exact}
    Problems \eqref{eq:opt_verification:bilevel} and \eqref{eq:optimality_verification_compact} have same optimum.
\end{theorem}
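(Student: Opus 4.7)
The plan is to prove equality of the two optimal values by establishing inequality in both directions, exploiting the special structure of the upper-level objective $c(\vx, f_{\theta}(\vx)) - c(\vx, \vy)$: because the follower's decision $\vy$ appears with a negative sign in the leader's objective, the leader is implicitly incentivized to drive $c(\vx, \vy)$ down to $\Phi(\vx)$, which is exactly what the $\argmin$ constraint in the bilevel formulation enforces. This alignment is the mechanism that makes the high-point relaxation tight.

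First I would prove $\text{opt}\eqref{eq:opt_verification:bilevel} \leq \text{opt}\eqref{eq:optimality_verification_compact}$. Given any feasible $(\vx, \vy^{*})$ for \eqref{eq:opt_verification:bilevel}, the lower-level constraint implies $\vy^{*} \in \mathcal{Y}^{*}(\vx) \subseteq \mathcal{Y}(\vx)$, so the same pair is feasible for \eqref{eq:optimality_verification_compact} with identical objective value. Taking the supremum over all bilevel-feasible pairs gives the inequality.

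Second, for the reverse direction, I would fix any $(\vx, \vy)$ feasible for \eqref{eq:optimality_verification_compact} and pick any $\vy^{*} \in \mathcal{Y}^{*}(\vx)$ (assumed nonempty, in line with the parametric optimization setup in Section \ref{sec:background:parametric_opt}). By the definition of $\Phi$, $c(\vx, \vy^{*}) = \Phi(\vx) \leq c(\vx, \vy)$, hence
\begin{equation*}
c(\vx, f_{\theta}(\vx)) - c(\vx, \vy) \;\leq\; c(\vx, f_{\theta}(\vx)) - c(\vx, \vy^{*}).
\end{equation*}
The pair $(\vx, \vy^{*})$ is feasible for \eqref{eq:opt_verification:bilevel}, so its objective value is a lower bound on $\text{opt}\eqref{eq:opt_verification:bilevel}$. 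Taking the supremum over $(\vx, \vy)$ feasible for \eqref{eq:optimality_verification_compact} yields the matching inequality.

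The main obstacle is really a modeling subtlety rather than a technical one: one must ensure $\mathcal{Y}^{*}(\vx)$ is nonempty whenever $\vx \in \gX$, since otherwise $\Phi(\vx)$ is an infimum that is not attained and the $\argmin$ in \eqref{eq:opt_verification:bilevel:lowerlevel} is empty, making \eqref{eq:opt_verification:bilevel} itself infeasible at such $\vx$. If attainment is not assumed, the argument still goes through by replacing ``pick $\vy^{*} \in \mathcal{Y}^{*}(\vx)$'' with a minimizing sequence and passing to the limit, at the cost of working with suprema instead of maxima. Aside from that, the proof is short and purely algebraic; no assumption of convexity, differentiability, or continuity of $c$ or $g$ is required, which is what allows the result to extend to the non-convex setting emphasized in the introduction.
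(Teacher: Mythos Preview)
Your proof is correct and follows essentially the same approach as the paper: both exploit the observation that since $\vy$ enters the leader's objective with a negative sign, feasibility of $\vy$ in $\mathcal{Y}(\vx)$ is already enough to recover the bilevel optimum. The paper phrases the reverse inequality as a contradiction argument on an optimal solution of \eqref{eq:optimality_verification_compact} (showing it must be lower-level optimal), whereas you give a direct domination argument that avoids assuming the compact maximum is attained; this is a minor presentational difference, not a different route.
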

\begin{proof}
Recall that the compact formulation \eqref{eq:optimality_verification_compact} is a relaxation of \eqref{eq:opt_verification:bilevel}.
Therefore, it suffices to show that an optimal solution to \eqref{eq:optimality_verification_compact}, denoted by $(\tilde{\vx}, \tilde{\vy})$, is feasible for \eqref{eq:opt_verification:bilevel}.

By definition, $\tilde{\vy} \in \mathcal{Y}(\tilde{\vx})$, i.e., $\tilde{\vy}$ is feasible for the lower-level problem \eqref{eq:opt_verification:bilevel:lowerlevel} with upper-level decision $\tilde{\vx}$.
Next, assume $\tilde{\vy}$ is not optimal for \eqref{eq:opt_verification:bilevel:lowerlevel}, i.e., there exists $\hat{\vy} \in \mathcal{Y}(\tilde{\vx})$ such that \revision{$c(\tilde{\vx}, \hat{\vy}) < c(\tilde{\vx}, \tilde{\vy})$}.
By construction, $(\tilde{\vx}, \hat{\vy})$ is feasible for \eqref{eq:optimality_verification_compact} with objective value strictly better than $(\tilde{\vx}, \tilde{\vy})$, which contradicts the optimality of $(\tilde{\vx}, \tilde{\vy})$.
\end{proof}

The proposed compact formulation \eqref{eq:optimality_verification_compact} has several advantages.
First, it naturally supports non-convex constraints and objectives,
In contrast, the standard approach of reformulating bilevel problems into a single-level problem with complementarity constraints, is not possible when the lower-level problem \eqref{eq:opt_verification:bilevel:lowerlevel} is non-convex.
\emph{This is the first tractable exact formulation for verifying the optimality of non-convex optimization proxies.}
Second, even when $P$ is convex and a single-level reformulation is possible, the compact formulation avoids the additional variables and constraints that come with a single-level reformulation.
In particular, it eliminates the need for complementarity constraints, which are notoriously difficult to solve.
This last point is demonstrated in the experiments of Section \ref{sec:DCOPF}.

\section{Projected Gradient Attack}
\label{sec:pgd}

\begin{algorithm}[!t]
   \caption{Projected Gradient Attack with Value Function Approximation (PGA-VFA)}
   \label{alg:pgd}
\begin{algorithmic}
    \STATE {\bfseries Input:} Input space $\mathcal{X}$, initial point $\vx_0$, value function approximation $\widehat{\Phi}(\cdot)$, number of iterations $T$
        \FOR{$t=0$ {\bfseries to} $T$}
            \STATE $\hat{\vx}_{t+1} = \vx_{t} + \lambda \cdot \nabla_\vx \left[ c(\vx_{t}, \ftheta(\vx_{t})) - \widehat{\Phi}(\vx_{t}) \right]$ \\
            \STATE $\vx_{t+1} = \text{Proj}_{\mathcal{X}}(\hat{\vx}_{t})$ 
        \ENDFOR
\end{algorithmic}
\end{algorithm}

The projected gradient attack is highly effective in finding high-quality feasible solutions, i.e., adversarial examples, for verification problems. It can be formalized as
\begin{subequations}
\label{eq:optimality_verification_pgd}
\begin{align}
    & \hat{\vx}_{t+1} = \vx_{t} + \lambda \nabla_\vx \left[ c(\vx_{t}, \ftheta(\vx_{t})) {-} \Phi(\vx_{t}) \right], \\
    & \vx_{t+1} = \text{Proj}_{\mathcal{X}}(\hat{\vx}_{t+1}). \label{eq:optimality_verification_pgd:projection}
\end{align}
\end{subequations}
When $\mathcal{X}$ is an $\ell_{p}$ ball, the projection step in (\ref{eq:optimality_verification_pgd:projection}) can be computed in close form.

A challenge in implementing the projected gradient attack is the computation of the gradient $\nabla_\vx \Phi(\vx)$. One possible approach is to use the implicit function theorem on the KKT conditions \cite{amos2017optnet, agrawal2019differentiable}. However, this gradient computation involves the solving of many optimization instances, which may be computationally intensive for large-scale problems as those studied in this paper. {\em A second key contribution of the paper is the use in the projected gradient attack of a piece-wise linear conservative approximation of convex value function.} It builds on the following well-known result.

\begin{theorem}\cite{boyd2004convex} \label{thm:value_function_approximation}
    Consider a convex parametric optimization $P$ where (i) the objective $c$ and left-hand side $g$ do not depend on $\vx$ and (ii) the constraints' right-hand side is an affine function of $\vx$.
    Then, the value function $\Phi$ of $P$ is convex.
\end{theorem}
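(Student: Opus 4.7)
The plan is to establish convexity of $\Phi$ directly from the definition. I would pick arbitrary $\vx_{1}, \vx_{2} \in \mathbb{R}^{p}$ and $\lambda \in [0,1]$, and show that
\begin{equation*}
\Phi(\lambda \vx_{1} + (1{-}\lambda) \vx_{2}) \leq \lambda \Phi(\vx_{1}) + (1{-}\lambda) \Phi(\vx_{2}).
\end{equation*}
The key structural observation is that under the stated hypotheses, the feasible set can be written as $\mathcal{Y}(\vx) = \{ \vy : \tilde{g}(\vy) \leq A\vx + \vb \}$ for some convex function $\tilde{g}$, matrix $A$ and vector $\vb$, so the dependence on $\vx$ enters only through an affine right-hand side, while the objective $c(\vy)$ is convex and independent of $\vx$.

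Next, I would select optimal solutions $\vy_{i} \in \mathcal{Y}^{*}(\vx_{i})$ for $i=1,2$ (assuming attainment for now) and form the convex combination $\vy_{\lambda} = \lambda \vy_{1} + (1{-}\lambda) \vy_{2}$. Using convexity of $\tilde{g}$,
\begin{align*}
\tilde{g}(\vy_{\lambda})
&\leq \lambda \tilde{g}(\vy_{1}) + (1{-}\lambda) \tilde{g}(\vy_{2}) \\
&\leq \lambda (A\vx_{1} + \vb) + (1{-}\lambda)(A\vx_{2} + \vb) \\
&= A (\lambda \vx_{1} + (1{-}\lambda) \vx_{2}) + \vb,
\end{align*}
which shows $\vy_{\lambda} \in \mathcal{Y}(\lambda \vx_{1} + (1{-}\lambda)\vx_{2})$. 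Applying convexity of $c$ then gives
\begin{equation*}
\Phi(\lambda\vx_{1} + (1{-}\lambda)\vx_{2}) \leq c(\vy_{\lambda}) \leq \lambda c(\vy_{1}) + (1{-}\lambda) c(\vy_{2}) = \lambda \Phi(\vx_{1}) + (1{-}\lambda)\Phi(\vx_{2}),
\end{equation*}
which is the desired inequality.

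The main subtlety, and the only nontrivial step, is the handling of cases where optimal solutions are not attained or where $\Phi(\vx_{i}) \in \{-\infty, +\infty\}$ (the latter corresponding to infeasibility of $P(\vx_{i})$). This is resolved in the standard way by working with $\epsilon$-optimal solutions $\vy_{i}$ satisfying $c(\vy_{i}) \leq \Phi(\vx_{i}) + \epsilon$, carrying out the argument above to obtain the inequality with an additive $\epsilon$ on the right-hand side, and then letting $\epsilon \downarrow 0$; infeasibility of either $\vx_{i}$ makes the right-hand side $+\infty$ and the inequality becomes vacuous. Since this is a classical result (see \cite{boyd2004convex}), I would keep the proof brief and cite the reference, noting that the subsequent use of Theorem~\ref{thm:value_function_approximation} in Section~\ref{sec:pgd} relies only on convexity of $\Phi$, which guarantees that any collection of subgradient-based affine minorants yields a valid conservative approximation $\widehat{\Phi}$ from below.
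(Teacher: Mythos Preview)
Your proof is correct and is the standard textbook argument for convexity of the value function under affine right-hand-side perturbations. The paper does not supply its own proof of this statement; it simply cites \cite{boyd2004convex}, and your direct verification via convex combinations of (near-)optimal solutions is precisely the classical argument found there.
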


\begin{figure}[!t]
\centering
\includegraphics[width=0.95\columnwidth]{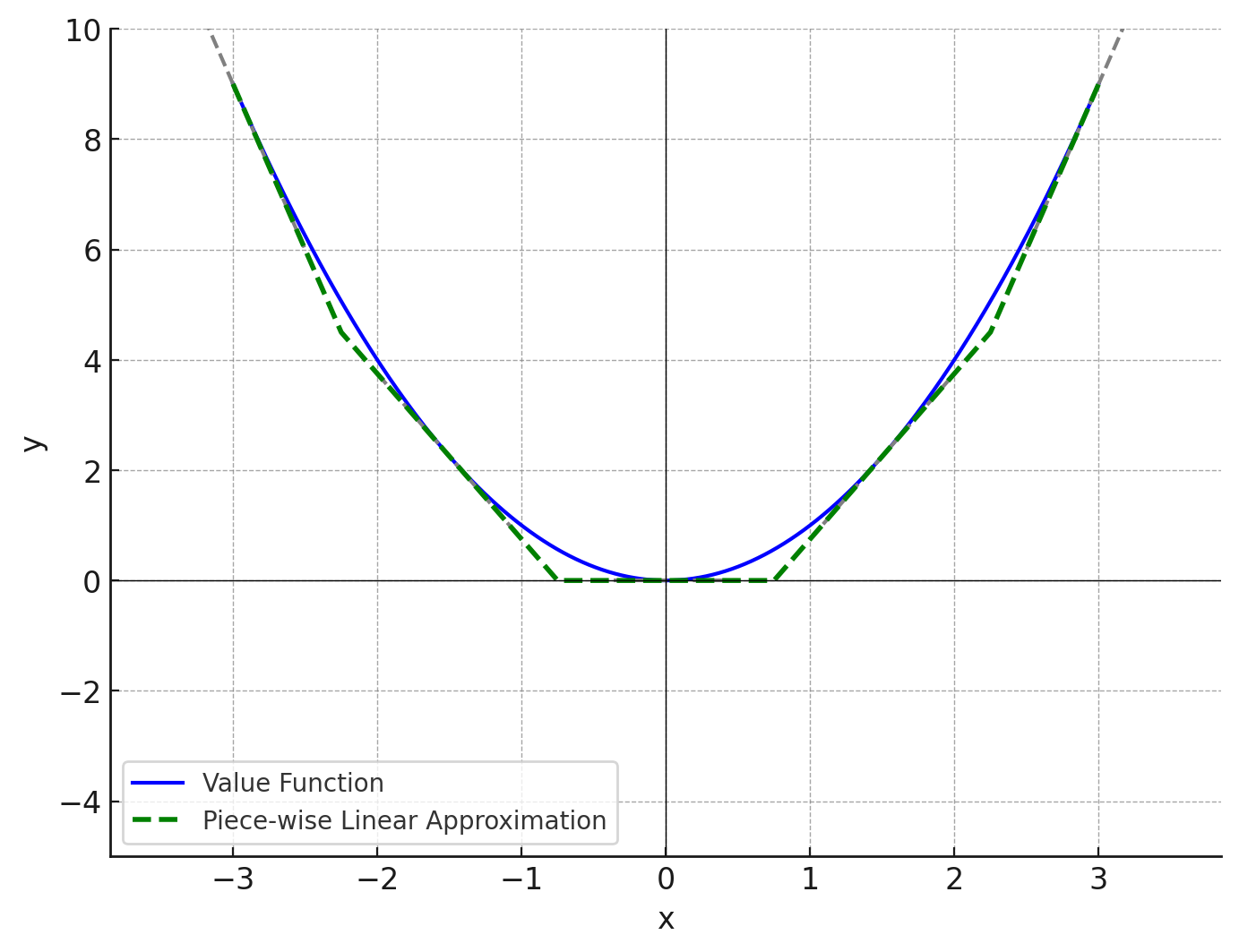}
\caption{Value Function and its Piece-wise Linear Approximation}
\label{fig:value_function_approximation}
\end{figure}

Theorem \ref{thm:value_function_approximation} implies that a convex value function can be outer-approximated by a closed-formed piecewise linear function:
\begin{align}
    \label{eq:gpa:value_func_approximation}
    \widehat{\Phi}(\vx) = \max_{i \in [N]} \Phi(\vx_i) + \nabla_{\vx} \Phi(\vx_i) ^{\intercal} (\vx - \vx_i) ,
\end{align}
where $\{\vx_i\}_{i=1}^N$ denotes a set of parameters of historical parametric optimization instances.
Moreover, the optimal dual $\boldsymbol{\lambda}_i$ of an instance with parameter $\vx_i$ represents a subgradient of the value function with respect to $\vx_i$.
If the optimal dual is unique, then $\nabla_\vx \Phi(\vx_i) = \boldsymbol{\lambda}_i$ \cite{nocedal1999numerical}.
Therefore, the approximate value function reads:
\begin{align}
    \label{eq:gpa:value_func_approximation:with_duals}
    \widehat{\Phi}(\vx) = \max_{i \in [N]} \Phi(\vx_i) + \boldsymbol{\lambda}_i^{\intercal} (\vx - \vx_i) ,
\end{align}
The piece-wise linear approximation on the convex value function is illustrated in Figure \ref{fig:value_function_approximation}. Other models such as Input Convex Neural Networks \cite{amos2017input} could also be used for the value function approximation.

The projected gradient attack may converge to a low-quality solution due to becoming trapped in local optima. This work proposed several techniques aimed at enhancing the search process. One acceleration technique consists in sampling a set of starting points in input domain $\{\vx_0^m\}_{m=1}^M$ and run PGA-VFA($\mathcal{X}$, $\vx_0^m$) for every starting point. Another acceleration technique to improve the search is to partition the input domain into subregions, and then run Algorithm (\ref{alg:pgd}) on different subregions $\{\mathcal{X}_p\}_{p=1}^P$ in parallel i.e., running PGA-VFA($\mathcal{X}_p$, $\vx_0$) for every subregion.

\section{DC Optimal Power Flow}
\label{sec:DCOPF}
\subsection{DC-OPF Formulation}
DC Optimal Power Flow (DC-OPF) is a fundamental problem for modern power system operations. It aims at determining the least-cost generator setpoints that meet grid demands while satisfying physical and operational constraints.
With the penetration of renewable energy and distributed energy resources, the system operators must continuously monitor risk in real-time, i.e., they must quickly assess the system’s behavior under various changes in load and renewables by solving a large volume of DC-OPF problems. However, traditional optimization solvers may not be capable of solving them quickly enough for large-scale power networks \cite{chen2023real}.
Recent advancements in learning-based methods have accelerated the process of finding feasible and empirically near-optimal solutions considerably faster than conventional approaches \cite{chen2023end,zhao2022ensuring,li2023learning}. {\em This paper aims at providing formal quality guarantees for optimization proxies in this space to complement existing primal learning methods}.

Consider the DC-OPF formulation
\begin{subequations}
\label{eq:DCOPF}
\begin{align}
    \min_{\pg, \xith} \quad & \bm{c}^{\top} \pg + \Mth \bm{e}^{\top}\xith\\
    \text{s.t.} \quad
        & \bm{e}^{\top} \pg = \bm{e}^{\top} \pd,
            \label{eq:DCOPF:power_balance}\\
        & +\mH \pg + \xith  \geq -\pfmax + \mH \pd,
            \label{eq:DCOPF:PTDF:min}\\
        & - \mH \pg + \xith \geq - \pfmax - \mH \pd,
            \label{eq:DCOPF:PTDF:max}\\
        & \pgmin \leq \pg \leq \pgmax,
            \label{eq:DCOPF:dispatch_bounds}\\
        & \vp \in \sR^{B}, \xith \in \sR_{+}^{E} \label{eq:DCOPF:variables}
\end{align}
\end{subequations}                       
where $B, E$ denote the total number of buses and transmission lines in the power grid, respectively, 
$\vd \in \sR^{B}$ denotes the electricity load, $\pg$ denotes the decision variables capturing energy dispatches, $\xith$ denotes the thermal limit violations, $\pfmax$ corresponds to the flow limits on the transmission lines and $\mH \in \sR^{E\times B}$ denotes the power transfer distribution factors.
The price of electricity generation is represented by $\bm{c}$, and $\Mth$ is the price of violating thermal constraints. 
The vector $\bm{e}$ consists of all ones.
Constraint \eqref{eq:DCOPF:power_balance} ensures the global power balance i.e., the total load equals the total supply.
Constraint \eqref{eq:DCOPF:PTDF:min} and \eqref{eq:DCOPF:PTDF:max} measure the thermal violations.
Constraint \eqref{eq:DCOPF:dispatch_bounds} ensures that the outputs of the generators remain within their physical limits.

\subsection{Compact Optimality Verification}

\begin{figure}[!t]
\centering
\includegraphics[width=0.95\columnwidth]{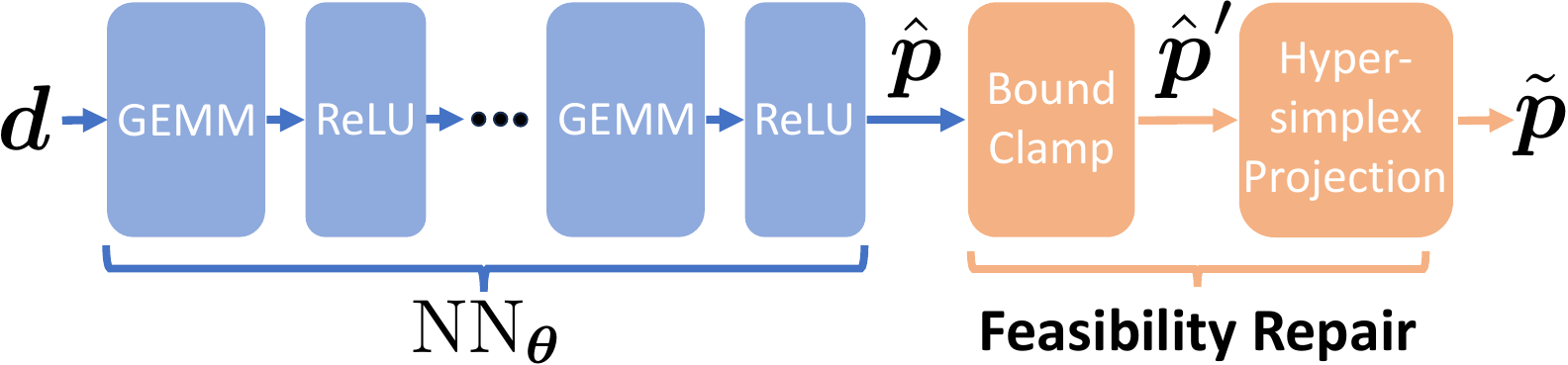}
\caption{Optimization Proxies for DCOPF}
\label{fig:optimization_proxies:dcopf}
\end{figure}
This work considers the optimization proxy proposed in \cite{chen2023end} and illustrated in Figure \ref{fig:optimization_proxies:dcopf}.
\revision{It consists of a fully-connected neural network with ReLU activation to predict the optimal dispatches and feasibility layers to ensure that the outputs satisfy the hard constraints. 
The feasibility layer consists of two parts, a bound-clamping layer, which employs a hard-sigmoid function to ensure that dispatch decisions remain within the generators' physical upper and lower generation limits \eqref{eq:DCOPF:dispatch_bounds} and a hypersimplex layer, which uses a differentiable binary search to guarantee that total power generation matches the total electricity demand, addressing constraint  \eqref{eq:DCOPF:power_balance}.}
Constraints \eqref{eq:DCOPF:PTDF:min} and \eqref{eq:DCOPF:PTDF:max} are soft and are penalized in the loss function when training the ML models. 
{\em Another contribution of this paper is a compact encoding of these layers into a mixed-integer programming formulation.} 
The detailed modeling of the compact formulation is deferred in Appendix \ref{appendix:dcopf:compact}.

\subsection{Empirical Evaluation}
\subsubsection{Experiment Setup}
The optimality verification for DCOPF proxies is evaluated over IEEE 57-/118-/300-bus and Pegase 1354-bus test cases from the PGLib library \cite{babaeinejadsarookolaee2019power}.
The data generation follows \cite{chen2023end}.
Denote by $\vd^{\text{ref}}$ the nodal load vector from the reference PGLib case.
The instances are generated by perturbing the reference load vector.
Namely, for instance $i$, $\vd^{(i)} = (\gamma^{(i)}+ \boldsymbol{\eta^{(i)}})\times\vd^{\text{ref}}$, 
where $\gamma^{(i)} \in \sR$ is a global scaling factor and $\boldsymbol{\eta^{(i)}}\in \mathbb{R}^{B}$ denotes element-wise noises.
The $\gamma$ is sampled from uniform distribution $U[80\%, 120\%]$ and for each load, $\eta$ is sampled from a uniform distribution $U[-5\%, 5\%]$.
This distribution captures system-wide correlations ($\gamma$), while allowing for local variability ($\boldsymbol{\eta}$).
The optimization proxies are trained using the self-supervised learning algorithm in \cite{chen2023end}.
It is important to note that the verification problem only depends on the weights of the trained proxy.

The parameter input domain $\mathcal{X}$ reflects the support of the distribution of instances described above.
Namely,
\begin{align*}
    &\gX = \{(\alpha {+} \mBeta)\cdot \vd^{\text{ref}}| {-}u \leq \alpha {-}1 \leq u, \mathbf{-5\%} {\leq} \mBeta {\leq} \mathbf{5\%}\},
\end{align*}
where $\alpha \, {\in} \, \sR$, $\mBeta \, {\in} \, \sR^{B}$ capture the distribution of $\gamma, \boldsymbol{\eta}$, and $u \, {\in} \, \{0, 1\%, 2\%, 5\%, 10\%, 20\%\}$ controls the size of the input domain.
Each value of $u$ yields a different optimality verification instance; note that larger values of $u$ make the instances harder to verify.

The value function approximation $\hat{\Phi}$, used in PGA-VFA (Algorithm \ref{alg:pgd}), is constructed using primal and dual solutions of 50,000 instances, generated using the above distribution.
PGA-VFA is executed in parallel, across 200 threads, using the acceleration techniques of Section \ref{sec:pgd}.
The initial step size is $10^{-3}$, and is reduced by a factor 10 if no improvement is recorded over 10 iterations.
Finally, PGA-VFA is stopped if no improved solution is found after 20 consecutive iterations, or a maximum of 500 iterations is reached.


All verification problems are solved with Gurobi 10.0 \cite{gurobi} using 16 threads and a 6-hour time limit.
Preliminary experiments revealed that Gurobi struggles to find primal-feasible solutions.
Therefore, unless specified otherwise, the $\vd^{\text{ref}} {\in} \mathcal{X}$ is always passed as a warm-start to the solver.
In addition, this work uses optimization-based bound tightening \cite{caprara2010global} to improve the MILP relaxation, by tightening the input domain of ReLU neurons; see Appendix \ref{appendix:dcopf:obbt}.
Finally, each verification instance is solved using 5 different seeds, and results are averaged using the shifted geometric mean
\begin{align*}
    \mu_{s}(x_1, \cdots, x_n) = \sqrt[n]{\Pi_{i} (x_{i} + s)} - s.
\end{align*}
The paper uses a shift $s$ of 1\% for optimality gaps and 1 for other values.
\revision{Solving times are reported in wall clock.}
Experiments are conducted on dual Intel Xeon 6226@2.7GHz machines running Linux on the cluster.


\subsubsection{Numerical Results}

    \begin{table}[!t]
        \centering
        \caption{%
            Comparison of presolved model size for Bilevel and Compact (proposed) formulations.
            \revision{Statistics are averages across 30 instances (6 distinct values of $u$ and 5 unique seeds).}
        }
        \label{tab:exp:DCOPF:model_sizes}
        {\fontsize{12pt}{15pt}\selectfont
        \resizebox{\columnwidth}{!}{
        \begin{tabular}{lrrrrrr}
            \toprule
                & \multicolumn{2}{c}{\#ConVars}
                & \multicolumn{2}{c}{\#BinVars} 
                & \multicolumn{2}{c}{\#Constraints} \\
            \cmidrule(lr){2-3}
            \cmidrule(lr){4-5}
            \cmidrule(lr){6-7}
            System &  Bilevel & Compact &  Bilevel & Compact & Bilevel & Compact\\
            \midrule
            57 & \textbf{255} & 259 & 69 & \textbf{64} & 350 & \textbf{342} \\
            118 & 609 & \textbf{522} & 218 & \textbf{100} & 862 & \textbf{619} \\
            300 & 1798 & \textbf{1320} & 861 & \textbf{325} & 2982 & \textbf{1821} \\
            1354 & 6739 & \textbf{4655} & 7281 & \textbf{1353} & 15373 & \textbf{6777} \\
             \bottomrule
         \end{tabular}}}
    \end{table}

    \begin{table}[!t]
        \centering
        \caption{%
            Comparison of solution times(s) for Bilevel and Compact (proposed) formulations across closed (i.e., solved) instances.
        }
        \label{tab:exp:DCOPF:exact_verification}
        \footnotesize
        \aboverulesep = 0.3mm \belowrulesep = 0.3mm
        \begin{tabular}{lrrrr}
            \toprule[0.15ex]
            System & \%$u$ &  Bilevel & Compact & Speedup$^{*}$  \\
            \midrule[0.1ex]
            57 
             & 0 & 0.4 & \textbf{0.3} & 22.9\% \\
             & 1 & \textbf{0.6} & 0.6 & -3.4\% \\
             & 2 & 0.6 & \textbf{0.5} & 10.4\% \\
             & 5 & 1.9 & \textbf{1.1} & 75.8\% \\
             & 10 & \textbf{1.2} & 1.2 & -1.7\% \\
             & 20 & 0.7 & \textbf{0.5} & 27.6\% \\
            \midrule[0.1ex]
            118 
             & 0 & 1.7 & \textbf{1.2} & 43.3\% \\
             & 1 & 3.9 & \textbf{2.1} & 89.7\% \\
             & 2 & 4.7 & \textbf{3.1} & 53.6\% \\
             & 5 & 28.4 & \textbf{12.1} & 135.2\% \\
             & 10 & 18.7 & \textbf{14.1} & 32.4\% \\
             & 20 & 76.8 & \textbf{59.4} & 29.3\% \\
            \midrule[0.1ex]
            300 
             & 0 & 119.8 & \textbf{63.4} & 88.9\% \\
             & 1 & 298.6 & \textbf{166.0} & 79.9\% \\
             & 2 & 1302.7 & \textbf{420.8} & 209.5\% \\
             & 5 & 17585.4 & \textbf{10507.9} & 67.4\% \\
             \bottomrule[0.15ex]
         \end{tabular}\\
         $^{*}$Average speedup compared to bilevel formulation.
    \end{table}

\begin{table}[t]
    \centering
    \caption{Primal and dual objective values attained by the Bilevel and Compact formulation on open (i.e., unsolved) instances.
    }
    \label{tab:exp:DCOPF:exact_verification_hard}
    {
    \resizebox{\columnwidth}{!}{
    \begin{tabular}{lrrrrrrr}
        \toprule
            &
            & \multicolumn{3}{c}{Primal bound (K\$)}
            & \multicolumn{3}{c}{Dual bound (M\$)} \\
        \cmidrule(lr){3-5}
        \cmidrule(lr){6-8}
        System &  \%$u$ 
            & Bilevel & Compact & \%Gain$^{\dagger}$ 
            & Bilevel & Compact & \%Gain$^{\dagger}$ \\
        \midrule
        300 
         & 10 & 466.76 & 596.98 & 27.90 & 4.45 & 3.82 & 14.08 \\
         & 20 & 410.19 & 582.81 & 42.08 & 10.79 & 9.29 & 13.88 \\
        \midrule
        1,354 
         & 0 & 94.22 & 360.03 & 282.13 & 29.64 & 26.00 & 12.25 \\
         & 1 & 84.44 & 365.50 & 332.83 & 36.35 & 35.39 & 2.63 \\
         & 2 & 50.12 & 369.68 & 637.65 & 47.64 & 45.13 & 5.26 \\
         & 5 & 54.32 & 371.52 & 584.02 & 78.79 & 74.31 & 5.69 \\
         & 10 & 46.63 & 353.20 & 657.42 & 135.32 & 129.99 & 3.94 \\
         & 20 & 49.52 & 359.72 & 626.40 & 249.02 & 235.79 & 5.31 \\
         \bottomrule
     \end{tabular}}}\\
     \footnotesize{$^{\dagger}$Relative improvement compared to bilevel formulation, in \%.}
\end{table}

\begin{figure*}
\centering
\includegraphics[width=0.33\columnwidth]{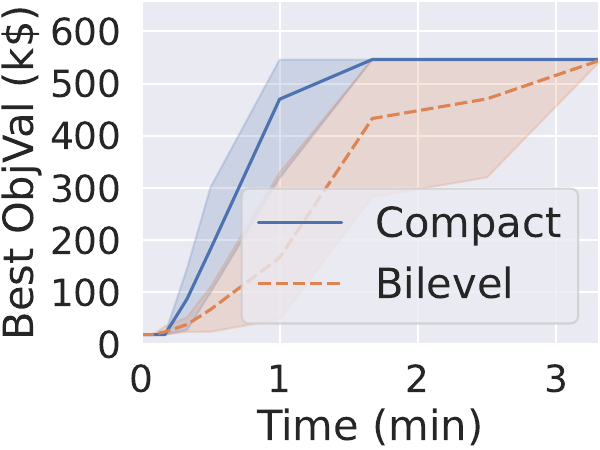}
\includegraphics[width=0.32\columnwidth]{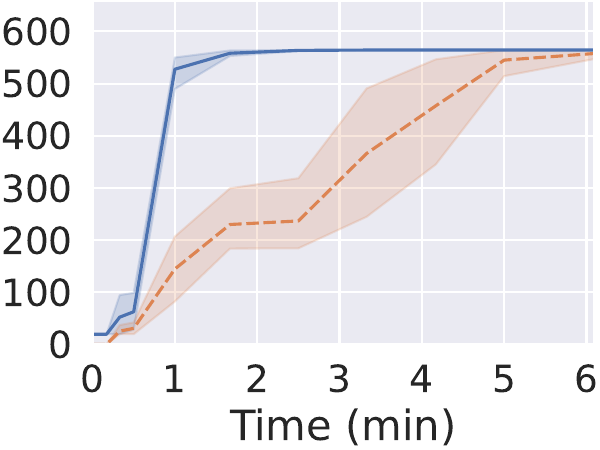}
\includegraphics[width=0.32\columnwidth]{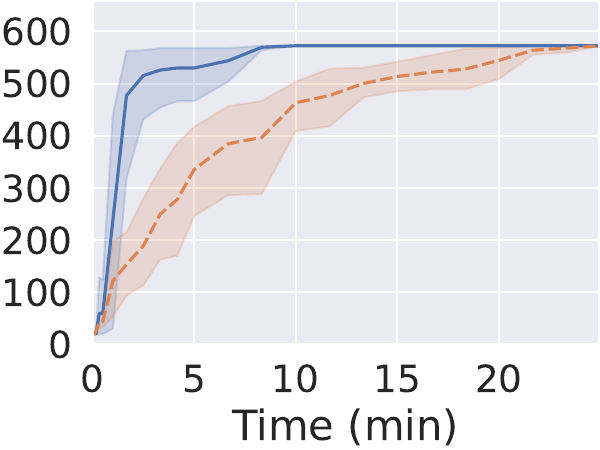}
\includegraphics[width=0.32\columnwidth]{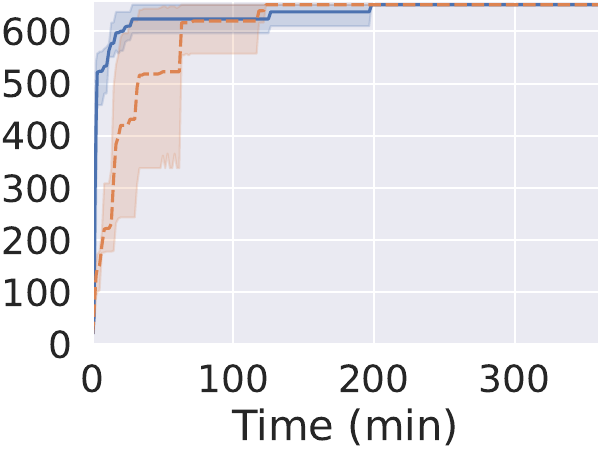}
\includegraphics[width=0.32\columnwidth]{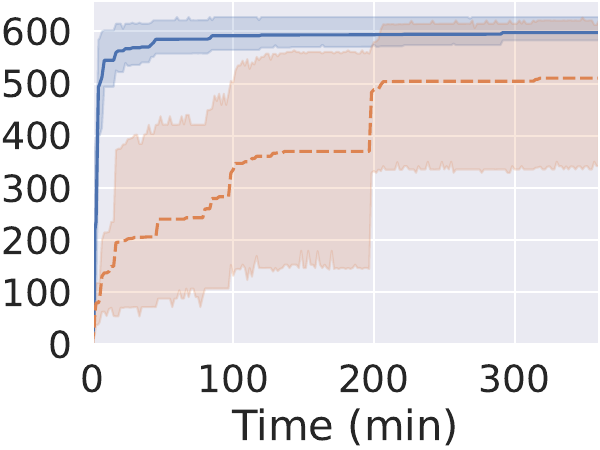}
\includegraphics[width=0.32\columnwidth]{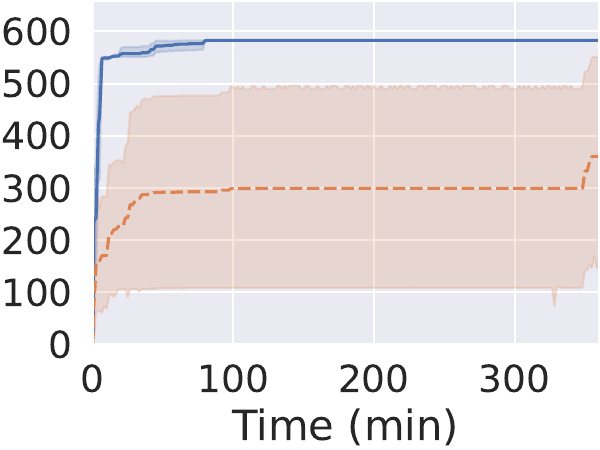} \\
\subfloat[$u=0\%$]{
\includegraphics[width=0.33\columnwidth]{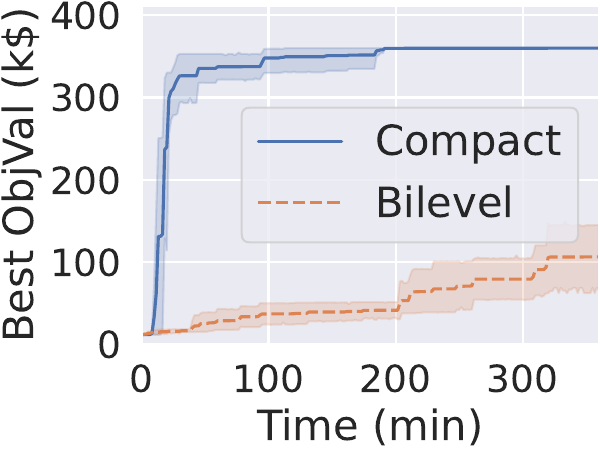}}
\subfloat[$u=1\%$]{
\includegraphics[width=0.32\columnwidth]{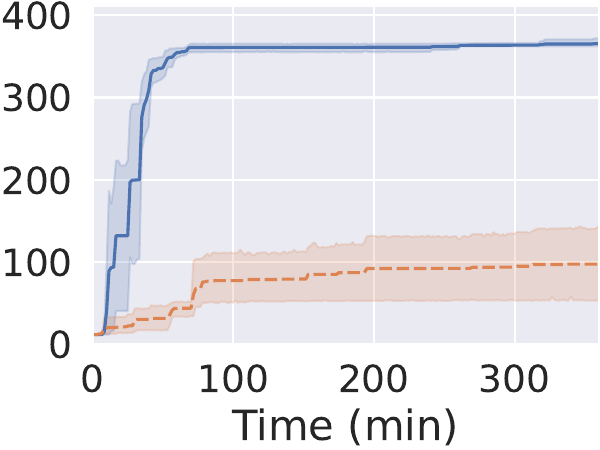}}
\subfloat[$u=2\%$]{
\includegraphics[width=0.32\columnwidth]{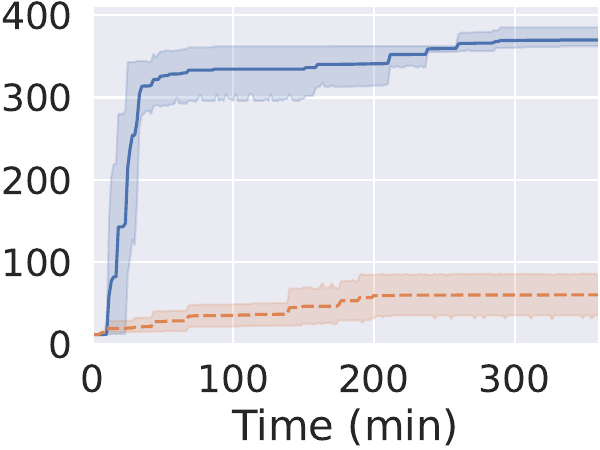}}
\subfloat[$u=5\%$]{
\includegraphics[width=0.32\columnwidth]{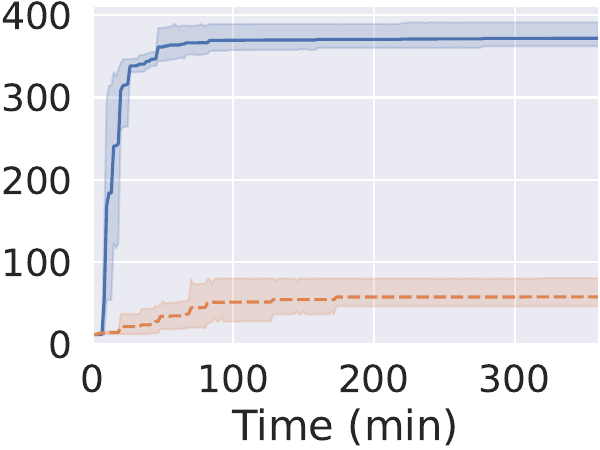}}
\subfloat[$u=10\%$]{
\includegraphics[width=0.32\columnwidth]{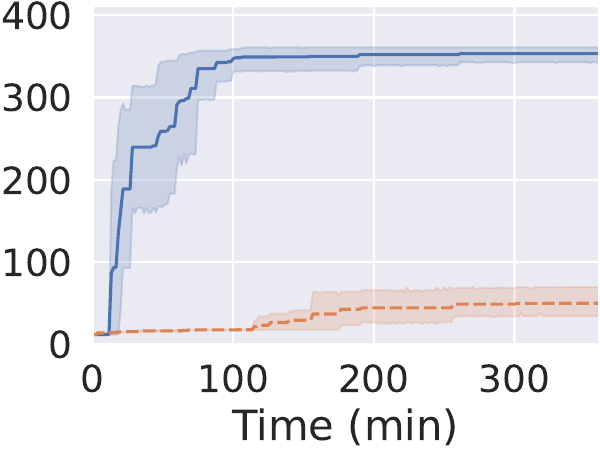}}
\subfloat[$u=20\%$]{
\includegraphics[width=0.32\columnwidth]{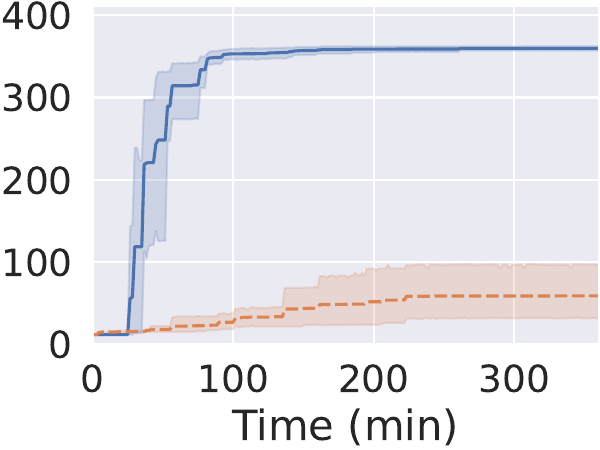}}

    \caption{Evolution of primal objective value for Compact and Bilevel formulations over time on the 300 IEEE system (top row) and the 1354 Pegase system (bottom row) with different sizes of input domains. Higher values indicate better outcomes.}
    \label{fig:primal_obj_val_along_time:DCOPF}
\end{figure*}

\begin{table}[t]
    \centering
    \caption{Comparisons of PGA-VFA and the compact formulation on final objective and solving times for large power systems.}
    \label{tab:exp:DCOPF:pga_vs_compact}
    {
    \resizebox{\columnwidth}{!}{
    \begin{tabular}{lrrrrrrr}
        \toprule
            &
            & \multicolumn{2}{c}{PGA-VFA}
            & \multicolumn{2}{c}{Compact} \\
        \cmidrule(lr){3-4}
        \cmidrule(lr){5-6}
        System &  \%$u$ &  Obj. (\$) & Time (s) &  Obj. (\$) & Time (s) & $^{\dagger}$T2PGA (s) \\
        \midrule
        300 
         & 0 & 544923.75 & 4.53 & 546038.13 & 63.41 & 52.06 \\
         & 1 & 562348.33 & 7.09 & 564263.39 & 166.01 & 114.79 \\
         & 2 & 572166.01 & 37.30 & 572464.17 & 420.83 & 307.40  \\
         & 5 & 572166.01 & 49.26 & 651163.59 & 10507.88 & 872.25 \\
         & 10 & 572166.01 & 41.41 & 596983.07 & t.o. & 2646.21  \\
         & 20 & 572166.01 & 17.86 & 582808.23 & t.o. & 2646.28 \\
        \midrule
        1,354 
         & 0 & 380673.60 & 67.26 & 360031.97 & t.o. & t.o.  \\
         & 1 & 399441.04 & 63.54 & 365497.51 & t.o. & t.o. \\
         & 2 & 403725.74 & 41.98 & 369678.23 & t.o. & t.o.\\
         & 5 & 403725.74 & 31.88 & 371524.52 & t.o. & t.o. \\
         & 10 & 571252.51 & 85.88 & 353196.16 & t.o. & t.o. \\
         & 20 & 571252.51 & 76.43 & 359722.30 & t.o. & t.o.  \\
         \bottomrule
    \end{tabular}}}\\
    \footnotesize{$^{\dagger}$Time taken by the compact formulation to reach the same primal objective as PGA; ``t.o." indicate timeouts.}
\end{table}

\begin{table}[t]
    \centering
    \caption{Comparisons of PGA-VFA and the compact formulation for different warm starts on final objective and solving times for large power systems}
    \label{tab:pga_impact}
    {
    \resizebox{\columnwidth}{!}{
    \begin{tabular}{lrrrr}
        \toprule
        System & \%$u$ & PGA-VFA & Compact+Nom & Compact+PGA\\
        \midrule
        300  
         & {\color{black}0} & 544923.75 & 546038.13 & 546038.13 \\
         & {\color{black}1} & 562348.32 & 564263.39 & 564263.44 \\
         & {\color{black}2} & 572166.01 & 572464.17 & 572464.17 \\
         & {\color{black}5} & 572166.01 & 651163.59 & 651163.38 \\
         & 10 & 572166.01 & 596983.07 & 596983.90 \\
         & 20 & 572166.01 & 582808.23 & 582808.47 \\
        \midrule
        1,354 
         & 0 & 380673.60 & 360031.97 & 380757.88 \\
         & 1 & 399441.04 & 365497.51 & 399579.63 \\
         & 2 & 403725.74 & 369678.23 & 406165.38 \\
         & 5 & 403725.74 & 371524.52 & 406164.74 \\
         & 10 & 571252.51 & 353196.16 & 571715.99 \\
         & 20 & 571252.51 & 359722.30 & 578732.15 \\
         \bottomrule
    \end{tabular}}
    }
\end{table}

\paragraph{Effectiveness of Compact Formulation}
Table \ref{tab:exp:DCOPF:model_sizes} reports the size (number of variables and constraints) of the compact formulation and bilevel formulation, after being presolved by Gurobi.
Table \ref{tab:exp:DCOPF:model_sizes} shows that the compact formulation results in substantially fewer binary decision variables than its bilevel counterpart, especially on large systems.

The size reduction of the compact formulation is reflected in the solving process.
For ease of comparison, the verification instances are split into two groups.
On the one hand, \emph{closed} instances are instances that can be solved by at least one approach within the prescribed time limit.
They include all 57-bus and 118-bus instances, as well as the 300-bus instances with $u=0, 1\%, 2\%, 5\%$.
On the other hand, \emph{open} instances are those that cannot be solved by either formulation.
They include the 300-bus instances with $u=10\%, 20\%$, and all the 1354-bus instances.

Table \ref{tab:exp:DCOPF:exact_verification} reports the solving times of the compact formulation and bilevel formulation on solved instances, and indicate the relative speedup of the compact formulation, defined as
\(\frac{t_{\text{Bilevel}} - t_{\text{Compact}}}{t_{\text{Compact}}} \times 100\%.\)
The compact formulation consistently outperforms the bilevel formulation, except for two small instances which are solved by both formulations in under 2 seconds.
Notably, the compact formulation achieves higher speedups on larger systems, with up to 209.5\% speedup on the 300-bus system with 2\% input perturbation.

Next, Table \ref{tab:exp:DCOPF:exact_verification_hard} reports the performance of two formulations on open instances, where none of the formulations can solve the instances to optimality within 6 hours, primarily due to the weak linear relaxation.
In this case, the comparison focuses on the final primal and dual objectives.
The gains in percentage under the Primal and Dual columns report the performance improvements on the primal and dual side, respectively: \revision{\(\frac{\text{Primal}_{\text{Compact}} - \text{Primal}_{\text{Bilevel}}}{\text{Primal}_{\text{Bilevel}}} \times 100\%\)} and 
\(\frac{\text{Dual}_{\text{Bilevel}} - \text{Dual}_{\text{Compact}}}{\text{Dual}_{\text{Bilevel}}} \times 100\%\).
As reported in Table \ref{tab:exp:DCOPF:exact_verification_hard}, the compact formulation finds substantially better primal solutions than the bilevel formulation, especially for harder instances. This is further supported in Figure \ref{fig:primal_obj_val_along_time:DCOPF}, where the compact formulation consistently converges faster. On the dual side, the compact formulation is slightly better than its bilevel counterpart.
Since the compact formulation systematically outperforms the bilevel formulation, the next experiments focus on the compact formulation.

\paragraph{Effectiveness of PGA-VFA}
Table \ref{tab:exp:DCOPF:pga_vs_compact} focuses on the performance of the proposed PGA-VFA and compact formulation \revision{in identifying feasible solutions} on large power systems (300 bus and 1354 bus).
The performance on small systems is deferred to Appendix \ref{appendix:dcopf:more_results} since the compact formulation solves those instances within 60 seconds. 
PGA-VFA is particularly good at finding high-quality primal solutions within short computing time.
It finds near-optimal solutions on the 300-system orders of magnitude faster than the compact formulation with a reference load vector as the warm-start.
\revision{For the $300$-bus system with 1\% perturbation, PGA-VFA finds a feasible solution with objective $562348.33$ in 7.09s. Meanwhile, Gurobi solves the compact MILP formulation in 166.01s, reporting an optimal value of $564263.39$. It takes Gurobi 114.79s to find a feasible solution at least as good as the one found by PGA-VFA (in 7.09s).}
The compact model cannot find the same quality of primal solutions as PGA-VFA on the 1354-systems within 6 hours.

Finally, Table \ref{tab:pga_impact} reports the impact of warm-starting the compact formulation with PGA-VFA.
The compact model with the PGA-VFA solution as a warm start always outperforms its counterparts with the reference load as the warm start.
Observe also that the optimization barely improves on the PGA-VFA solution.

\section{Optimality Verification for Knapsack}
\label{sec:Knapsack}

One major benefit of the proposed compact formulation is that it may provide quality guarantees for optimization proxies that approximate non-convex parametric optimization problems.
This section illustrates this capability on the knapsack problem.

\subsection{Knapsack Formulation}
Consider a knapsack problem with $K$ items
\begin{align}
\label{eq:knapsack}
    \max_{\vy} \quad
    \left\{
        \vv^T\vy
    \ \middle| \
        \vw^T\vy \leq l, 
        \vy \in \{0,1\}^K
    \right\},
\end{align}
where $\vv$ denotes the value of the items, $l$ is the knapsack capacity and $\vw$ denotes the weight of the items.
The binary decision variable $\vy_k = 1$ indicates putting the item $k$ in the knapsack and vice versa.

\subsection{Optimality Verification}

\begin{figure}[!t]
\centering
\includegraphics[width=0.95\columnwidth]{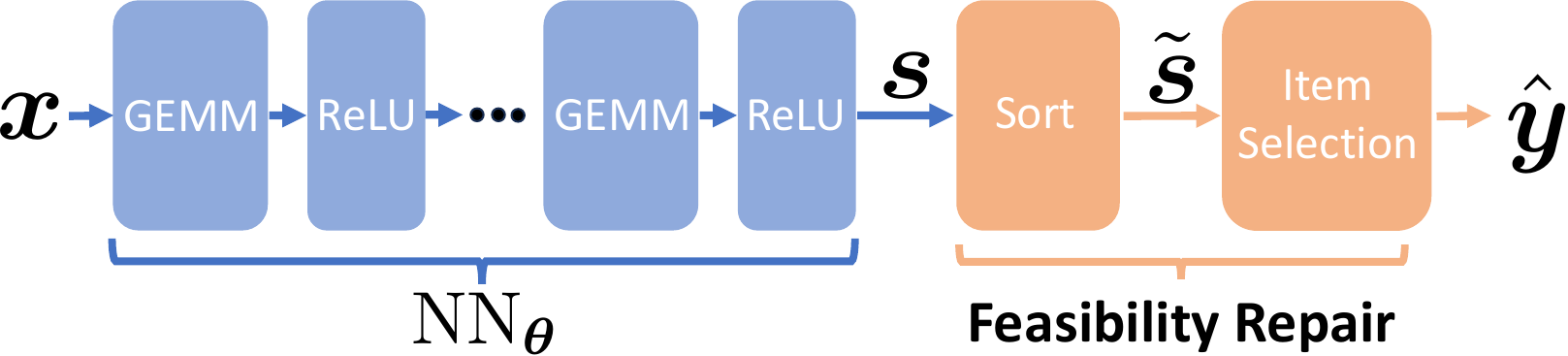}
\caption{An Optimization Proxy for the Knapsack Problem.}
\label{fig:optimization_proxies:knapsack}
\vspace{-1em}
\end{figure}

This paper considers the optimality verification of an optimization proxy for the knapsack problem. 
The proxy is summarized in Figure \ref{fig:optimization_proxies:knapsack}.
\revision{First, a fully-connected neural network with ReLU activation predicts a score for each item, where a higher score indicating higher desirability of the item.
Then, a novel feasibility repair step sorts the items by the predicted score in a descending order and adds items to the knapsack following the order until reaching the knapsack's capacity limit, thus enforcing $\vw^{T}\vy \, {\leq} \, l$.}
{\em Another contribution of this paper is a compact formulation of this repair layer that is presented in Appendix \ref{appendix:knapsack:compact}}.
Because the parametric optimization is non-convex, the bilevel formulation in (\ref{eq:opt_verification:bilevel}) cannot be reformulated into a single level.

\subsection{Numerical Evaluation}
The data for the experiments is generated by perturbing the total capacity of the knapsack and the item values, where the scaler $l$ of the total capacity and the item values $\vv$ are sampled from the uniform distribution $U[80\%, 120\%]$.
More information of the knapsack proxies is detailed in the Appendix \ref{appendix:knapsack}.
The input domain is defined as:
\begin{align*}
    \gX &= \{ \alpha l,\mBeta \vv \ | \ {-}u \leq \alpha-1 \leq u, {-}\mathbf{u} \leq \mBeta-1 \leq \mathbf{u}\},
\end{align*}
where $u$ controls the size of the input domain.


\begin{table}[!t]
    \centering
    \caption{Optimality Verification for Knapsack}
    \label{tab:optimality:knapsack}
    {
    \aboverulesep = 0.3mm \belowrulesep = 0.3mm
    \begin{tabular}{lrrr}
        \toprule[0.15ex]
        \#items & \%u & Gap (\%) & Time (s) \\
        \midrule[0.1ex]
        10 
         & 1 & 0.0 & 0.4 \\
         & 5 & 0.0 & 0.4 \\
         & 10 & 0.0 & 0.4 \\
         & 20 & 0.0 & 0.7 \\
        \midrule[0.1ex]
        50 
         & 1 & 0.0 & 89.1 \\
         & 5 & 0.0 & 117.3 \\
         & 10 & 0.0 & 642.8 \\
         & 20 & 142.0 & t.o. \\
        \midrule[0.1ex]
        80 
         & 1 & 0.0 & 1928.3 \\
         & 5 & 0.0 & 5997.8 \\
         & 10 & 230.0 & t.o. \\
         & 20 & 1350.0 & t.o. \\
        \bottomrule[0.15ex]
    \end{tabular}}
    \\
    \footnotesize{$^{*}$ Solved with 16 threads and time limits of 6 hours.}
    \vspace{-1em}
\end{table}

Table \ref{tab:optimality:knapsack} demonstrates the compact formulation can verify optimization proxies for Knapsack. {\em It highlights a key benefit of the compact formulation: its ability to verify non-convex optimization problems.}

\section{Conclusion}
\label{sec:conclusion}

The paper presents a novel compact formulation for optimality verification of optimization proxies.
It offers substantial computational benefits over the traditional bilevel formulations and can verify non-convex optimization problems.
The paper also introduces a \revision{Projected Gradient Attack} with a value function approximation as a primal heuristic for effectively finding high-quality primal solutions.
The methodology is applied to large-scale DC-OPF and knapsack problems, incorporating new MILP encodings for the feasibility layers.
Extensive experiments demonstrate the efficacy of the methodology in verifying proxies for DC-OPF and knapsack problems, highlighting its computational advantages.
Future works will investigate the scalability of the methodology on large industrial instances with the coupling with spatial branch and bound and $\alpha,\beta$-CROWN, and extend the verification on auto-regression-based optimization proxies.

\paragraph{Limitations}
\revision{The proposed compact formulation is more general than prior state-of-the-art, and has fewer variables and constraints. Nevertheless, this exact verification scheme, which eventually relies on solving MIP problems, shares common limitations with existing exact verification methods, as highlighted in \cite{wang2021betacrown, zhang2022general, ferrari2022complete}.
State-of-the-art exact verification solvers primarily focus on ReLU networks, and do not always support arbitrary linear/discrete/nonlinear constraints which are needed for optimality verification.
Thus, MIP solvers are the only existing tools capable of solving optimality verification problems.
MIP solvers are known to struggle when solving large verification instances, especially for finding high-quality solutions (which correspond to adversarial examples).
Therefore, scalability issues still exist when solving large-scale verification problems, especially with deep neural networks and large input space.
Exact optimality verification on large-scale industrial instances, and exploring its effectiveness for nonlinear activations and non-perceptron architectures, e.g.,Transformers, are promising research directions.}

\section*{Acknowledgements}

This research was partially supported by NSF awards 2007164 and 2112533, and ARPA-E PERFORM award DE-AR0001280.

\section*{Impact Statement} \label{sec:impact_statement}
Optimization proxies have been increasingly used in critical infrastructures such as power systems and supply chains, offering the potential to catalyze \revision{substantial} advancements.
These advancements include facilitating the transition of power systems to high-renewable grids, as well as enhancing the operational resilience and sustainability of supply chain operations. In this context, ensuring the quality and reliability of optimality proxies is essential for their practical deployment.
The compact optimality verification and the gradient-based primal heuristic play a pivotal role in addressing these needs. By offering a more reliable foundation for the deployment of optimization proxies, this research has the potential to substantially impact human life and contribute to social welfare improvement.

\bibliography{main}
\bibliographystyle{icml2024}

\appendix
\onecolumn
\section{More information of DCOPF Optimization Proxies}

\subsection{Detailed Modeling of Compact Formulation} \label{appendix:dcopf:compact}

\begin{figure}[!t]
\centering
\includegraphics[width=0.9\columnwidth]{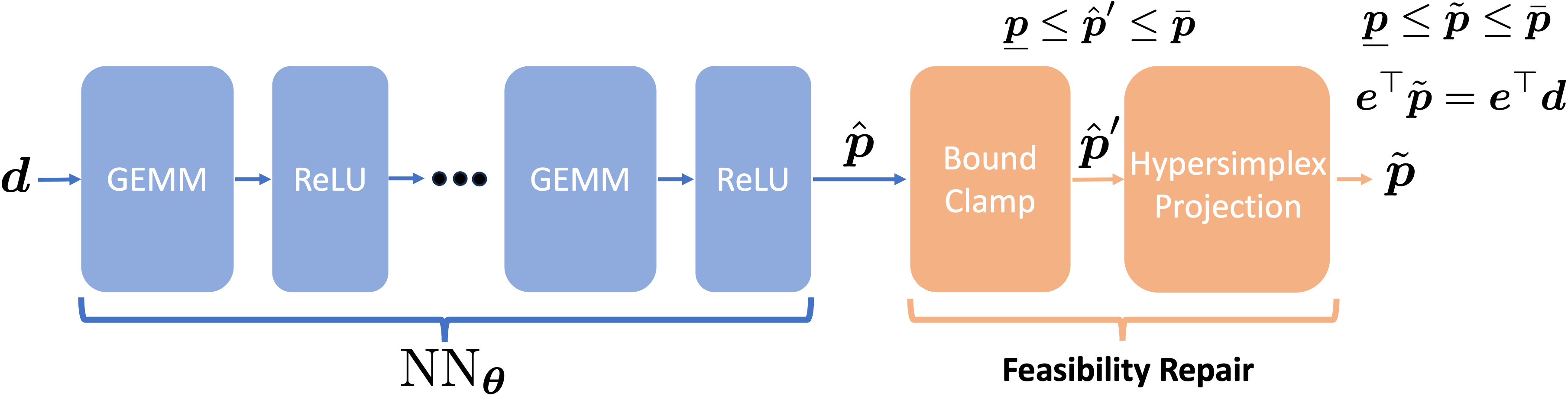}
\caption{Optimization Proxies for DCOPF}
\label{fig:optimization_proxies:dcopf:appendix}
\end{figure}

As shown in Figure \ref{fig:optimization_proxies:dcopf:appendix}, the proposed DC-OPF proxy integrates a multilayer perceptron (MLP), a bound clamping, and a hypersimplex projection layer.
The MLP can be translated into a Mixed Integer Linear Program (MILP) by modeling its general matrix multiplication (GEMM) blocks with linear constraints and its ReLU activations using binary variables and linear constraints, as detailed in prior works \cite{tjeng2017evaluating,bunel2018unified}.

\paragraph{Bound Clamp}
The bound clamping layer is designed to enforce minimum and maximum generation limits \eqref{eq:DCOPF:dispatch_bounds}.
It can be expressed as the composition of two ReLU layers, effectively clamping the input between specified maximum and minimum power generation limits:
\begin{subequations}
    \label{eq:DCOPF:VeriOPT:clamp_milp}
    \begin{align}
    \hat{\vp}' & = \text{clamp}(\hat{\vp}, \pgmin, \pgmax) 
        = \min( \max(\hat{\vp}, \pgmin), \pgmax)\\
    & = -\text{ReLU}(-\text{ReLU}(\hat{\vp} - \pgmin)-\pgmin + \pgmax) + \pgmax.
    \end{align}
\end{subequations}
The clamp operator clips all entries in $\hat{\vp}$ into the range $[\pgmax, \pgmin]$.
It can be formulated as linear constraints and binary decision variables, leveraging the ReLU MILP representation from the aforementioned studies \cite{tjeng2017evaluating,bunel2018unified}.

\paragraph{Hypersimplex Projection}
The Hypersimplex Projection takes as input $\vp \in \gH = \{\pg \in \sR^B | \pgmin \leq \pg \leq \pgmax\},$ and outputs $\tilde{\vp} \in \gS$, where $\gS$ is the hypersimplex
\begin{subequations}
    \begin{align*}
        \gS = \{\pg \in \sR^B | \pgmin \leq \pg \leq \pgmax, \bm{e}^{\top} \pg = \bm{e}^{\top} \pd\}.
    \end{align*}
\end{subequations}
The input vector $\pg$ must satisfy minimum and maximum generation limits \eqref{eq:DCOPF:dispatch_bounds}, and the output vector $\tilde{\vp}$ jointly satisfies minimum/maximum generation limits \eqref{eq:DCOPF:dispatch_bounds} and power balance constraint \eqref{eq:DCOPF:power_balance}.
The projection adjusts all entries in $\pg$ uniformly until either the total power generation matches the total demand, or the entries in $\pg$ reach their bounds.
Formally, $\tilde{\vp}$ is obtained as the unique solution of the system of equations
\begin{subequations} 
\label{eq:DCOPF:VeriOPT:hypersimplex}
\begin{align}
    \tilde{\pg} &= \text{clamp}(\hat{\pg}' + \delta, \pgmin, \pgmax), \label{eq:DCOPF:VeriOPT:proportional_response} \\
    \bm{e}^{\top} \tilde{\pg} &= \bm{e}^{\top} \pd,  \label{eq:DCOPF:VeriOPT:power_balance}
\end{align}
\end{subequations}
where $\delta \in \mathbb{R}$ is a scalar.
Note that \eqref{eq:DCOPF:VeriOPT:hypersimplex} can reduced to a uni-dimensional problem in $\delta$ by substituting out $\tilde{\vp}$.
Namely, letting $f(\delta) = \bm{e}^{\top}\text{clamp}(\hat{\pg}' + \delta, \pgmax, \pgmin)$, \eqref{eq:DCOPF:VeriOPT:power_balance} reduces to $f(\delta) = \bm{e}^{\top} \pd$.
The uniqueness of the solution to \eqref{eq:DCOPF:VeriOPT:hypersimplex} then follows from the fact that $f$ is monotonically increasing.
$\delta$ can be effectively computed using binary search (see Algorithm \ref{alg:hyper_simplex_proj}).
It can be easily parallelized in PyTorch \cite{paszke2019pytorch} with its subgradient computed using auto-differentiation.

\begin{algorithm}
   \caption{Hypersimplex Projection via Binary Search}
   \label{alg:hyper_simplex_proj}
\begin{algorithmic}
    \STATE {\bfseries Input:} Initial dispatch $\tilde{\pg}$, Dispatch bounds $\pgmax$, $\pgmin$, demand $\pd$, numerical tolerance $\epsilon$
    \STATE Initialize $\bar{\delta} = \max(\pgmax - \pgmin)$, $\ubar{\delta} = -\max(\pgmax - \pgmin)$, $\delta = (\ubar{\delta} - \bar{\delta}) / 2$, $D = \bm{e}^{\top} \pd$
    \WHILE{$|\ubar{\delta} - \bar{\delta}| \geq \epsilon$ or $|f(\delta) - D| \geq \epsilon$}
        \IF{$f(\delta) \ge D$}
            \STATE $\bar{\delta} = \delta$
        \ELSE
            \STATE $\ubar{\delta} = \delta$
        \ENDIF
         \STATE $\delta = (\ubar{\delta} - \bar{\delta}) / 2$
    \ENDWHILE
    \\ \textbf{return } $\delta$
\end{algorithmic}
\end{algorithm}

The Hypersimplex projection layer is then represented as an MILP by encoding the system of equations \eqref{eq:DCOPF:VeriOPT:hypersimplex}.
Namely, constraint \eqref{eq:DCOPF:VeriOPT:proportional_response} is encoded as an MILP following Equation \eqref{eq:DCOPF:VeriOPT:clamp_milp}, and constraint \eqref{eq:DCOPF:VeriOPT:power_balance} is linear.
The compact optimality verification problem for DC-OPF proxies thus reads:
\begin{subequations}
\label{eq:DCOPF:Veriopt}
\begin{align}
    \max_{\pd \in \mathcal{X}} \quad & \bm{c}^{\top} \pg + \Mth \bm{e}^{\top}\xith - \bm{c}^{\top} \tilde{\pg} - \Mth \bm{e}^{\top}\tilde{\xi}^{\text{th}}\\
    \text{s.t.} \quad
        & \hat{\pg} = \text{NN}_\vtheta(\pd), 
        \label{eq:DCOPF:VeriOPT:NN} \\
        & \tilde{\vf} = \mH(\tilde{\pg} - \pd), 
        \label{eq:DCOPF:VeriOPT:thermal} \\
        & \tilde{\xi}^{\text{th}} =  \max\{\tilde{\bm{f}} - \pfmax, - \pfmax - \tilde{\bm{f}}, \boldsymbol{0} \}, 
        \label{eq:DCOPF:VeriOPT:thermal_violation} \\
        & (\ref{eq:DCOPF:VeriOPT:clamp_milp}), (\ref{eq:DCOPF:VeriOPT:hypersimplex}),\\
        & (\ref{eq:DCOPF:power_balance}) - (\ref{eq:DCOPF:dispatch_bounds}), \\
        & \vp \in \sR^{B}, \hat{\pg} \in \sR^{B}, \tilde{\pg} \in \sR^{B}, \xith \in \sR_{+}^{E}, \tilde{\xi}^{\text{th}} \in \sR_{+}^{E} \label{eq:DCOPF:Veriopt:variables}
\end{align}
\end{subequations}
Constraint \eqref{eq:DCOPF:VeriOPT:NN} encodes the inference of the neural network, which could be linearized by introducing binary decision variables \citep{tjeng2017evaluating,bunel2018unified}.
Constraint \eqref{eq:DCOPF:VeriOPT:thermal} and \eqref{eq:DCOPF:VeriOPT:thermal_violation} compute the thermal violation of the predicted dispatch.
Before solving formulation \eqref{eq:DCOPF:Veriopt}, all clamp and $\max$ operators are linearized by introducing binary decision variables, which leads to a Mixed Integer Linear Program (MILP).

\subsection{Detailed Modeling of Bilevel Formulation}
\label{appendix:dcopf:bilevel} 

The Bilevel formulation of DC-OPF proxy reads:
\begin{subequations}
\label{eq:DCOPF:bilevel}
\begin{align}
    \max_{\pd \in \mathcal{X}} \quad & \bm{c}^{\top} \pg + \Mth \bm{e}^{\top}\xith - \bm{c}^{\top} \tilde{\pg} - \Mth \bm{e}^{\top}\tilde{\xi}^{\text{th}}\\
    \text{s.t.} \quad
        & (\ref{eq:DCOPF:VeriOPT:NN}) - (\ref{eq:DCOPF:VeriOPT:thermal_violation}), (\ref{eq:DCOPF:Veriopt:variables}) \\
        & (\ref{eq:DCOPF:VeriOPT:clamp_milp}), (\ref{eq:DCOPF:VeriOPT:hypersimplex}),\\
        & \pg, \xith = \text{DC-OPF}(\pd)
\end{align}
\end{subequations}
where lower level $\text{DC-OPF}(\pd)$ outputs the optimal dispatch and thermal violations by solving formulation (\ref{eq:DCOPF}) with input $\pd$.

To enhance the tractability, the work in \cite{nellikkath2021physics} reformulates $\text{DC-OPF}(\pd)$ with KKT conditions:

\begin{subequations} \label{eq:DCOPF:KKT}
\begin{align}
    (\ref{eq:DCOPF:power_balance}) - (\ref{eq:DCOPF:variables}) \\
    \ve \lambda + \mH^{\top}\nuThMin - \mH^{\top} \nuThMax + \muPgMin - \muPgMax & = c \label{eq:DCOPF:KKT:dual}\\
    \nuThMin + \nuThMax + \zeta &= \Mth \ve\\
    \muPgMin, \muPgMax, \nuThMin, \nuThMax, \zeta & \geq 0 \label{eq:DCOPF:KKT:dual:variable_bounds}\\
    \nuThMin^{\top} (\mH \pg + \xith + \pfmax - \mH \pd)
        & = 0 \label{eq:DCOPF:KKT:compl:nu_thm_min}\\
    \nuThMax^{\top} (-\mH \pg + \xith + \pfmax + \mH \pd)
        & = 0 \label{eq:DCOPF:KKT:compl:nu_thm_max}\\
    \muPgMin^{\top} (\pg - \pgmin)
        & = 0 \label{eq:DCOPF:KKT:compl:mu_pg_min}\\
    \muPgMax^{\top} (\pgmax - \pg)
        & = 0 \label{eq:DCOPF:KKT:compl:mu_pg_max}\\
    \zeta^{\top} \xith
        & = 0 \label{eq:DCOPF:KKT:compl:xi_thm}
   \end{align}
\end{subequations}

Constraints (\ref{eq:DCOPF:power_balance}) - (\ref{eq:DCOPF:variables}) model the primal feasibility.
Constraints (\ref{eq:DCOPF:KKT:dual}) - (\ref{eq:DCOPF:KKT:dual:variable_bounds}) model the dual feasibility, where $\muPgMax$ and $\muPgMin$ denote the dual variables for the upper and lower bounds in Constraint \eqref{eq:DCOPF:dispatch_bounds}, respectively.
The $\nuThMin, \nuThMax$ are the dual variables for the upper and lower thermal limits in Constraints \eqref{eq:DCOPF:PTDF:max} and \eqref{eq:DCOPF:PTDF:min}, respectively.
$\lambda$ denotes the dual variable for the power balance constraint \eqref{eq:DCOPF:power_balance}.
$\zeta$ denotes the dual variable for the non-negativity of thermal violations.
Constraints \eqref{eq:DCOPF:KKT:compl:nu_thm_min}-\eqref{eq:DCOPF:KKT:compl:xi_thm} model the complementary slackness, which can be reformulated as an MILP using the standard big-M formulation.
Note that valid bounds on all primal variables can be derived from the primal formulation.
Dual variables $\nuThMin, \nuThMax, \zeta$ are naturally bounded by $\Mth$.
For the remaining dual variables, a large $M$ value is selected.


Finally, the reformulated Bilevel formulation reads
\begin{subequations}
\label{eq:DCOPF:Veriopt:Bilevel}
\begin{align}
    \max_{\pd \in \mathcal{X}} \quad & \bm{c}^{\top} \pg + \Mth \bm{e}^{\top}\xith - \bm{c}^{\top} \tilde{\pg} - \Mth \bm{e}^{\top}\tilde{\xi}^{\text{th}}\\
    \text{s.t.} \quad
        & (\ref{eq:DCOPF:VeriOPT:NN}) - (\ref{eq:DCOPF:VeriOPT:thermal_violation}), (\ref{eq:DCOPF:Veriopt:variables}) \\
        & (\ref{eq:DCOPF:VeriOPT:clamp_milp}), (\ref{eq:DCOPF:VeriOPT:hypersimplex}),\\
        & (\ref{eq:DCOPF:power_balance}) - (\ref{eq:DCOPF:variables}) \\
        & (\ref{eq:DCOPF:KKT:dual}) - (\ref{eq:DCOPF:KKT:compl:xi_thm})
\end{align}
\end{subequations}

\subsection{Optimization-based Bound Tightening} \label{appendix:dcopf:obbt}
The MILP formulations introduce a significant number of binary decision variables. 
The convergence of these MILP problems becomes notably challenging due to the poor quality of their linear relaxations. 
Consequently, tightening the bounds of the input variables for activation functions is crucial to enhancing the relaxations and improving the overall solution process. 
This paper uses Optimization-Based Bound Tightening (OBBT) to refine variable bounds and solution efficiency \cite{caprara2010global,zhao2024bound}.

Consider a neural network with an input vector $ \vx^{(0)} := \vx_0 \in \mathbb{R}^{n_0} $. In this network, $ n_i $ represents the number of neurons in the $ i $-th layer. The network consists of $ L $ layers, where each layer $ i $ has an associated weight matrix $ \mW^{(i)} \in \mathbb{R}^{n_i \times n_{i-1}} $ and a bias vector $ \vb^{(i)} \in \mathbb{R}^{n_i} $, for $ i \in \{1, \ldots, L\} $. Let $ \vy^{(i)} $ denote the pre-activation vector and $ \vx^{(i)} $ the post-activation vector at layer $ i $, with $ \vx^{(i)} = \sigma(\vy^{(i)}) $ where $ \sigma $ could be any activation function. Define $f$ to be the desired property for any input $\vx^{(0)} \in \mathcal{C}$.

The optimization problem is as follows:
\begin{equation}
\begin{aligned}
\min \quad & f(\vy^{(L)}) \\
\text{s.t.} \quad & \vy^{(i)} = \mW^{(i)}\vx^{(i-1)} + \vb^{(i)}, & \forall i \in \{1, \ldots, L\}, \\
& \vx^{(i)} = \sigma(\vy^{(i)}), & \forall i \in \{1, \ldots, L-1\}, \\
& \vx^{(0)} \in \mathcal{C}.
\end{aligned}
\label{eq:model1}
\end{equation}

To tighten variable bounds, OBBT solves two optimization subproblems for each neuron to determine its maximum and minimum bounds \cite{caprara2010global,zhao2024bound}. Specifically, let $\vy^{(t)}_k$ denote the $k$-th neuron at layer $t$ subject to network constraints. Given $0\leq t \leq L$, the problem states:
\begin{equation} \label{eq:obbt_instances}
\begin{aligned}
\text{max/min} \quad & \vy^{(t)}_k \\
\text{s.t.} \quad & \vy^{(i)} = \mW^{(i)}\vx^{(i-1)} + b^{(i)}, & \forall i \in \{1, \ldots, t\}, \\
& \vx^{(i)} = \sigma(\vy^{(i)}), & \forall i \in \{1, \ldots, t-1\}, \\
& \vx^{(0)} \in \mathcal{C}, \\
& \vy^{(i)}_l \leq \vy^{(i)} \leq \vy^{(i)}_u, & \forall i \in \{1, \ldots, t\},\\
& \vx^{(i)}_l \leq \vx^{(i)} \leq \vx^{(i)}_u & \forall i \in \{1, \ldots, t-1\}.
\end{aligned}
\end{equation}

\subsection{More results on optimality verification of DCOPF proxies} \label{appendix:dcopf:more_results}
Table \ref{tab:exp:DCOPF:pga_vs_compact_small} reports the final objective and solving time on small power systems, where Compact formulation can solve all instances to optimality within 1 minute.

\begin{table}[!ht]
    \centering
    \caption{Comparisons of PGA-VFA and Compact formulation on final objective and solving time on small power systems. T2PGA reports the time that the compact formulation takes to reach the same primal objective of the proposed PGA-VFA. Compact formulation is very effective at finding optimal solutions for small systems i.e., it solves all instances to optimality within 1 minute.}
    \label{tab:exp:DCOPF:pga_vs_compact_small}
    \begin{tabular}{lrrrrrrr}
        \toprule
            &
            & \multicolumn{2}{c}{PGA-VFA}
            & \multicolumn{2}{c}{Compact} \\
        \cmidrule(lr){3-4}
        \cmidrule(lr){5-6}
        System &  \%$u$ &  ObjVal (\$) & Time (s) &  ObjVal (\$) & Time (s) & T2PGA (s) \\
        \midrule
        57 
         & 0 & 127.36 & 2.17 & 141.36 & 0.31 & 0.10 \\
         & 1 & 137.35 & 1.44 & 142.29 & 0.59 & 0.11 \\
         & 2 & 143.70 & 4.31 & 151.54 & 0.54 & 0.12 \\
         & 5 & 171.53 & 3.35 & 179.12 & 1.10 & 1.10 \\
         & 10 & 175.89 & 2.28 & 432.20 & 1.21 & 1.10 \\
         & 20 & 1527.72 & 5.59 & 3837.33 & 0.54 & 0.30 \\
        \midrule
        118 
         & 0 & 752.29 & 1.43 & 6216.97 & 1.20 & 0.74 \\
         & 1 & 4615.37 & 6.96 & 7917.56 & 2.07 & 1.17 \\
         & 2 & 4615.37 & 7.61 & 9607.68 & 3.08 & 2.00 \\
         & 5 & 4615.37 & 4.44 & 14329.88 & 12.08 & 3.00 \\
         & 10 & 5448.69 & 6.26 & 68556.32 & 14.14 & 6.67 \\
         & 20 & 6048.77 & 9.04 & 69109.46 & 59.41 & 17.27 \\
         \bottomrule
    \end{tabular}
\end{table}

\section{More information of Knapsack}  \label{appendix:knapsack}

Recall the knapsack problem formulation
\begin{subequations}
\label{eq:knapsack:primal}
\begin{align}
    \max_{\vy} \quad 
        & \vv^T\vy\\
    s.t. \quad
    & \vw^T\vy \leq l, \label{eq:knapsack:capacity}\\
    & \vy \in \{0,1\}^K.\label{eq:knapsack:binary}
\end{align}
\end{subequations}

\subsection{Detailed Modeling of Compact Formulation} \label{appendix:knapsack:compact}
\begin{figure}[!t]
\centering
\includegraphics[width=0.9\columnwidth]{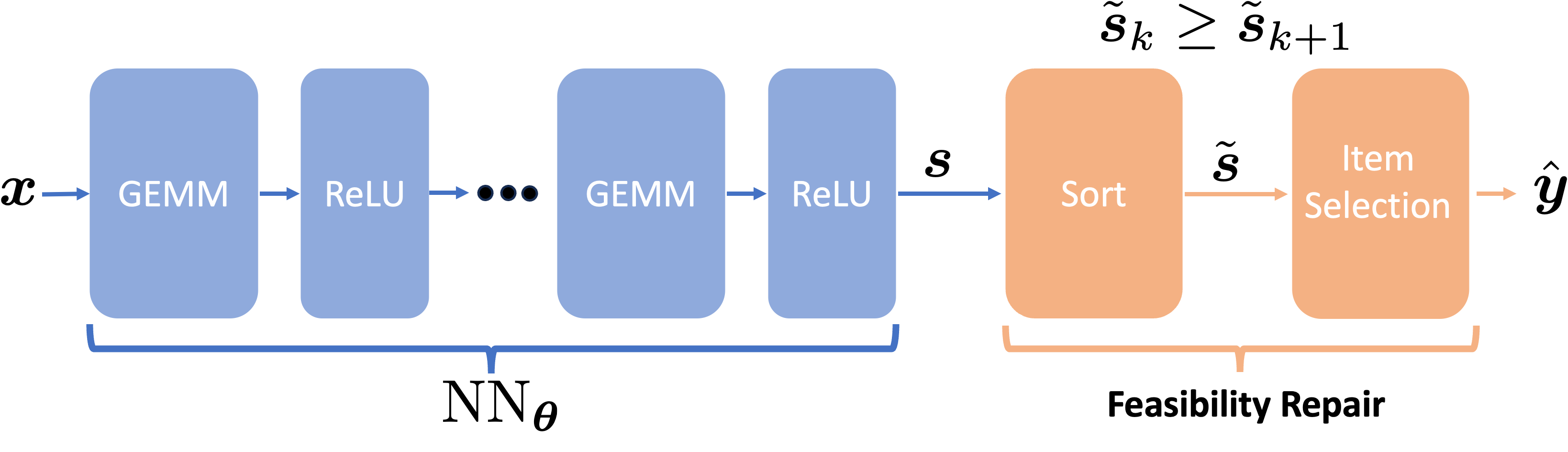}
\caption{Optimization Proxies for Knapsack}
\label{fig:optimization_proxies:knapsack:appendix}
\end{figure}

Figure \ref{fig:optimization_proxies:knapsack:appendix} illustrates the Knapsack proxies. First, a fully-connected neural network with ReLU activation predicts a score for each item, where a higher score indicating higher desirability of the item. Then, a feasibility repair step sorts the items by the predicted score in a descending order and adds items to the knapsack following the order until reaching the knapsack's capacity limit, addressing constraint \eqref{eq:knapsack:capacity}.
The MLP is trained disjointly with the feasibility repair steps in the training due to their non-differentiability.
The neural network is trained using supervised learning to minimize the distances of the predicted scores to the ground truth:
$l(\vs, \vs^*)  = \|\vs - \vs^*\|_2$.
The ground truth score is computed heuristically:
$\vs^* = \frac{\vv}{\vw}$, where $\vv$ denotes the value of the items and $\w$ denotes the weight of the items. Note that the score is exactly the solution of the continuous relaxation of Knapsack problems.

The following part focuses on formulating the sorting and item selection operators into MILP.
\paragraph{Sorting}
Denote the predicted score $\vs$ from the neural network, the sorting operation sorts the item id with the score descendingly.
The sorting operation is modeled by introducing a permutation matrix:
\begin{subequations}
\label{eq:knapsack:optimality_verification:sorting}
\begin{align}
    & \tilde{\vs} = \vp\vs, \label{eq:knapsack:Veriopt:permutation_s}\\
    & \tilde{\vs}_{k} \ge \tilde{\vs}_{k+1}, \forall k \in [K] \label{eq:knapsack:Veriopt:sort_descending}\\
    & \sum_i^K \vp_{i,k} = 1, \forall k \in [K], \label{eq:knapsack:Veriopt:permutation1}\\
    & \sum_k^K \vp_{i,k} = 1, \forall i \in [K], \label{eq:knapsack:Veriopt:permutation2}\\
    & \vp \in \{0,1\}^{K\times K}, \label{eq:knapsack:Veriopt:permutation3}\\
    & \vs \in \mathbb{R}_+^{K}, \tilde{\vs} \in \mathbb{R}_+^{K}.
\end{align}
\end{subequations}
The $\tilde{\vs}$ denotes the score after the sortation.
The $\vp$ denotes the permutation matrix to convert predicted score $\vs$ to sorted score $\tilde{\vs}$.
Constraint (\ref{eq:knapsack:Veriopt:permutation1})-(\ref{eq:knapsack:Veriopt:permutation3}) define the permutation matrix and constraint (\ref{eq:knapsack:Veriopt:permutation_s}) and (\ref{eq:knapsack:Veriopt:sort_descending}) ensure the perturbation matrix encodes the orders of sorting the score descendingly.

\paragraph{Item Selection}
After obtaining the permutation matrix, the item selection operator could be modeled as:

\begin{subequations}
\label{eq:knapsack:optimality_verification:item_selection}
\begin{align}
    & \tilde{\vy} = \vp\hat{\vy}, \label{eq:knapsack:Veriopt:permutation_y} \\
    & \tilde{\vw} = \vp\vw, \label{eq:knapsack:Veriopt:permutation_w} \\
    & \vw^T\hat{\vy} \leq l, \label{eq:knapsack:Veriopt:add_item3}\\
    & \tilde{\vy}_{k} \ge \tilde{\vy}_{k+1}, \forall k \in [K] \label{eq:knapsack:Veriopt:add_item1}\\
    & \sum_{i=1}^k \tilde{\vw}_{i} \geq (1-\tilde{\vy}_k)(l+1), \forall k \in [K] \label{eq:knapsack:Veriopt:add_item2}\\
    & \tilde{\vy} \in \{0,1\}^K, \hat{\vy} \in \{0,1\}^K.
\end{align}
\end{subequations}
Constraint (\ref{eq:knapsack:Veriopt:permutation_y}) and (\ref{eq:knapsack:Veriopt:permutation_w}) permutate the item action and weights.
Constraint (\ref{eq:knapsack:Veriopt:add_item3}) ensures the total weight of the added items is less than the knapsack capacity.
Constraint (\ref{eq:knapsack:Veriopt:add_item1}) ensures items are added sequentially following the sorted scores 
Constraint (\ref{eq:knapsack:Veriopt:add_item2}) ensures that the addition of items continues only if adding another item does not exceed the knapsack's capacity.
Finally, following the reformulation of the operations and Formulation \eqref{eq:optimality_verification_compact}, the compact optimality verification for Knapsack proxies could be written as a MILP and readily be solved by solvers like Gurobi \cite{gurobi}.

Finally, the optimality verification formulation reads:
\begin{subequations}
\label{eq:knapsack:optimality_verification}
\begin{align}
    \max_{\vx = \{\vv, l\} \in \mathcal{X}} \quad 
        & \vv^T\vy - \vv^T\hat{\vy} \label{eq:knapsack:Veriopt:obj}\\
    s.t. \quad
    & \vs = \text{NN}_{\boldsymbol{\theta}}(\vx), \label{eq:knapsack:Veriopt:NN} \\
    & (\ref{eq:knapsack:optimality_verification:sorting}), (\ref{eq:knapsack:optimality_verification:item_selection}), \label{eq:knapsack:Veriopt:feasibility} \\
    & (\ref{eq:knapsack:capacity}), (\ref{eq:knapsack:binary})
\end{align}
\end{subequations}
The objective function \eqref{eq:knapsack:Veriopt:obj}, constraints \eqref{eq:knapsack:Veriopt:NN} and \eqref{eq:knapsack:Veriopt:feasibility} are linearized by introducing binary decision variables, which gives a MILP.

\end{document}